\newlength\myverbindent 
\def\verbatim@processline{%
 \hspace{\myverbindent}\the\verbatim@line\par}
\def\C{{\bf C}}
\def\res{\hbox{\rm res\kern .7pt}}
\def\Oe{O_{\varepsilon}}
\def\diag{\hbox{\rm diag\kern .7pt}}
\def\Zm{Z^{(m)}}
\def\Fm{F^{(m)}}
\def\Am{A^{(m)}}
\def\pput(#1,#2)#3{\noindent\smash{\raise#2pt\hbox to 0pt
   {\kern #1pt #3\hss}}\ignorespaces}
\title{The AAA algorithm for rational approximation}
\author{Yuji Nakatsukasa\thanks{\texttt{nakatsukasa@maths.ox.ac.uk},
Mathematical Institute, University of Oxford, Oxford, OX2 6GG, UK.}\and
Olivier S\`ete\thanks{\texttt{sete@math.tu-berlin.de}, Institute of
Mathematics, TU Berlin, Stra{\ss}e des 17.\ Juni 136,
10623 Berlin, Germany.} \and Lloyd
N.~Trefethen\thanks{\texttt{trefethen@maths.ox.ac.uk}, Mathematical
Institute, University of Oxford, Oxford, OX2 6GG, UK.} \ YN
was supported by the Japan Society for the Promotion
of Science as an Overseas Research Fellow.\ \ OS and LNT
were supported by the European Research Council under the European
Union's Seventh Framework Programme (FP7/2007--2013)/ERC grant
agreement 291068.\ \ The views expressed in this article are
not those of the ERC or the European Commission, and the European
Union is not liable for any use that may be made of the information
contained here.}
\begin{document}

\maketitle


\begin{quote}
\em For Jean-Paul Berrut, the pioneer of numerical algorithms based on
rational barycentric representations, on his 65th birthday.
\end{quote}

\bigskip

\begin{abstract}
We introduce a new algorithm for approximation by rational
functions on a real or complex set of points,
implementable in 40 lines of Matlab and requiring no user
input parameters.  Even on a disk or interval
the algorithm may outperform existing methods, and on more
complicated domains it is especially competitive.  The core
ideas are (1) representation of the rational approximant in
barycentric form with interpolation at certain support points and
(2) greedy selection of the support points to avoid exponential
instabilities.  The name AAA stands for
``adaptive Antoulas--Anderson'' in honor of the authors who
introduced a scheme based on (1).  We present the core algorithm
with a Matlab code and nine applications and describe variants
targeted at problems of different kinds. Comparisons are made
with vector fitting, RKFIT, and other existing methods for
rational approximation.
\end{abstract}

\begin{keywords}rational approximation, barycentric formula,
analytic continuation, AAA algorithm, Froissart doublet, vector
fitting, RKFIT
\end{keywords}
\begin{AMS}41A20, 65D15\end{AMS}

\pagestyle{myheadings}
\thispagestyle{plain}
\markboth{\sc Nakatsukasa, S\`ete, and Trefethen}
{\sc AAA algorithm for rational approximation}

\section{Introduction}
Rational approximations of real or complex functions are used
mainly in two kinds of applications.  Sometimes they provide
compact representations of functions, much more efficient than
polynomials for functions with poles or other singularities
on or near the domain of approximation or on unbounded domains.
Other times, their role is one of extrapolation: the extraction of
information about poles or values or other properties of a function
in regions of the real line or complex plane beyond where it is
known a priori.  For example, standard methods of acceleration of
convergence of sequences and series, such as the eta and epsilon
algorithms, are based on rational approximations~\cite{bgm,bz}.
For a general discussion of the uses of rational approximation,
see Chapter~23 of~\cite{atap}, and for theoretical foundations,
see~\cite{braess}.

Working with rational approximations, however, can be problematic.
There are various challenges here, one of which particularly
grabs attention: {\em spurious poles,} also known as {\em
Froissart doublets,} which can be regarded either as poles with
very small residues or as pole-zero pairs so close together as to
nearly cancel~\cite{bz13,froissart,gk03,gp97,gp99,stahlspurious}.
Froissart doublets arise in the fundamental mathematical problem
--- i.e., in ``exact arithmetic'' --- and are the reason why
theorems on convergence of rational approximations, e.g.\ of
Pad\'e approximants along diagonals of the Pad\'e table, typically
cannot hold without the qualification of convergence in capacity
rather than uniform convergence~\cite{bgm,pomm}.  On a computer
in floating point arithmetic, they arise all the more often;
we speak of {\em numerical Froissart doublets,} recognizable by
residues on the order of machine precision.  These difficulties
are related to the fact that the problem of analytic continuation,
for which rational approximation is the most powerful general
technique, is ill-posed.  See Chapter~26 of~\cite{atap}.

In this paper we propose a {\em AAA algorithm\/} for rational
approximation that offers a speed, flexibility, and robustness
we have not seen in other algorithms; the name stands for
``adaptive Antoulas--Anderson''\footnote{We write ``a AAA''
rather than ``an AAA'' because in speech we say ``triple-A.''}.
(More recent material related to the Antoulas--Anderson method can
be found in~\cite{ionita}, and a
discussion of related methods is given in Section 11.)
The algorithm combines two ideas.  First, following Antoulas and
Anderson~\cite{aa} (though their presentation of the mathematics
is very different), rational functions are represented in
barycentric form with interpolation at certain support points
selected from a set provided by the user. 
Second, the algorithm
grows the approximation degree one by one, selecting support
points in a systematic greedy fashion so as to avoid exponential
instabilities.  Numerical Froissart doublets usually
do not appear, and if they do, they can usually be removed by
one further solution of a least-squares problem.

Perhaps the most striking feature of the AAA algorithm is that
it is not tied to a particular domain of approximation such as
an interval, a circle, a disk, or a point.  Many methods for
rational approximation utilize bases that are domain-dependent,
whereas the AAA barycentric representation, combined with its
adaptive selection of support points, avoids such a dependence.
The algorithm works effectively with point sets that may include
discretizations of disconnected regions of irregular shape,
possibly unbounded, and the functions approximated may have poles
lying in the midst of the sample points.  Thus the AAA algorithm
is fast and flexible, but on the other hand, it does not claim
to achieve optimality in any particular norm such as $L^2$ or
$L^\infty$.  For such problems more specialized methods may be
used, though as we shall mention in Section~\ref{sec-variants} and
have subsequently developed in~\cite{fntb}, the AAA algorithm may
still play a role in providing an initial guess or as the design pattern
for a variant algorithm based, for example, on iterative reweighting.

\section{\label{algebra}Rational barycentric representations}
The barycentric formula takes the form
of a quotient of two partial fractions,
\begin{equation}
r(z) = {n(z)\over d(z)} = {\sum_{j=1}^m \left. {w_j^{} f_j^{}\over z- z_j^{}} \right/
\sum_{j=1}^m  {w_j^{} \over z- z_j^{}}},
\label{bary}
\end{equation}
where $m\ge 1$ is an integer, $z_1^{},\dots,z_m^{}$ are
a set of real or complex distinct {\em support points,}
$f_1^{},\dots,f_m^{}$ are a set of real or complex {\em data
values,} and $w_1^{},\dots,w_m^{}$ are a set of real or complex
{\em weights.}  As indicated in the equation, we let $n(z)$
and $d(z)$ stand for the partial fractions in the numerator and
the denominator.  When we wish to be explicit about step numbers,
we write $r_m^{}(z)= n_m^{}(z)/d_m^{}(z)$.

The {\em node polynomial\/} $\ell\kern .5pt$ associated with the set
$z_1^{},\dots,z_m^{}$ is the monic polynomial of degree $m$ with
these numbers as roots,
\begin{equation}
\ell(z) = \prod_{j=1}^m (z-z_j^{}).
\label{node}
\end{equation}
If we define
\begin{equation}
p(z) = \ell(z) n(z), \quad q(z) = \ell(z) d(z),
\end{equation}
then $p$ and $q$ are each polynomials of degree at most $m-1$.
Thus we have a simple link between the barycentric
representation of $r$ and its more familiar representation as
a quotient of polynomials,
\begin{equation}
r(z) = {p(z)/\ell(z)\over q(z)/\ell(z)}
= {p(z)\over q(z)}.
\label{link}
\end{equation}
This equation tells us that $r$ is a rational function of type
$(m-1,m-1)$, where, following standard terminology, we say that
a rational function is of {\em type $(\kern .5pt\mu,\nu)$} for
integers $\mu,\nu \ge 0$ if it can be written as a quotient of a
polynomial of degree at most~$\mu$ and a polynomial of degree at
most $\nu\kern .7pt $, not necessarily in lowest terms.\footnote{As
a special case it is also customary to say that the zero function
is not only of type $(\kern .5pt \mu,\nu)$ for any $\mu,\nu\ge 0$,
but also of type $(-\infty,\nu)$ for any $\nu\ge 0$.} The numerator
$n(z)$ and denominator $d(z)$ are each of type $(m-1,m)$, so it
is obvious from (\ref{bary}) that~$r$ is of type $(2m-1,2m-1)$;
it is the cancellation of the factors $1/\ell(z)$ top and bottom
that makes it actually of type $(m-1,m-1)$.  This is the paradox
of the barycentric formula: it is of a smaller type than it looks,
and its poles can be anywhere except where they appear to be (assuming
the weights $w_j^{}$ in~(\ref{bary}) are nonzero).
The paradox goes further in that, given any set of support points
$\{z_j^{}\}$, there is a special choice of the weights $\{w_j^{}\}$
for which~$r$ becomes a polynomial of degree $m-1$.  This is
important in numerical computation, providing a numerically stable
method for polynomial interpolation even in thousands of points;
see~\cite{bt,chebfun,atap} for discussion and references.  However,
it is not our subject here.  Here we are concerned with the cases
where (\ref{bary}) is truly a rational function, a situation
exploited perhaps first by Salzer~\cite{salzer} and Schneider and
Werner~\cite{sw} and most importantly in subsequent years
by Berrut and his collaborators~\cite{berrut,berrut00,bbm,bt,fh,klein}.
For further links to the literature see Section~11.

A key aspect of (\ref{bary}) is its interpolatory property.  At
each point $z_j^{}$ with $w_j^{}\ne 0$, the formula is undefined,
taking the form $\infty / \infty$ (assuming $f_j \ne 0$).  However,
this is a removable singularity, for $\lim_{z\to z_j^{}} r(z)$
exists and is equal to $f_j^{}$.  Thus if the weights~$w_j^{}$
are nonzero, (\ref{bary}) provides a type $(m-1,m-1)$ {\em
rational interpolant} to the data $f_1^{},\dots,f_m^{}$ at
$z_1^{},\dots,z_m^{}$.  Note that such a function has $2m-1$
degrees of freedom, so roughly half of these are fixed by the
interpolation conditions and the other half are not.

We summarize the properties of barycentric representations
developed in the discussion above by the following theorem.

\medskip

\begin{theorem}[Rational barycentric representations]
Let $z_1^{},\dots, z_m^{}$ be an arbitrary set of distinct complex
numbers.  As $f_1^{},\dots ,f_m^{}$ range over all complex values
and $w_1^{},\dots ,w_m^{}$ range over all nonzero
complex values, the functions
\begin{equation}
r(z) = {n(z)\over d(z)} =
{\sum_{j=1}^m \left. {w_j^{} f_j^{}\over z- z_j^{}} \right/
\sum_{j=1}^m  {w_j^{} \over z- z_j^{}}}
\label{baryagain}
\end{equation}
range over the set of all
rational functions of type $(m-1,m-1)$ that have
no poles at the points $z_j^{}$.
Moreover, $r(z_j^{}) = f_j^{}$ for each $j$.
\end{theorem}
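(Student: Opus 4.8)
The plan is to prove the theorem in two directions. First I would show that every function $r$ of the barycentric form \eqref{baryagain} with nonzero weights is a rational function of type $(m-1,m-1)$ with no poles at the $z_j^{}$ and satisfies $r(z_j^{}) = f_j^{}$; this is essentially the content of the discussion preceding the statement. Specifically, setting $\ell(z) = \prod_{j=1}^m (z-z_j^{})$ as in \eqref{node} and clearing denominators, one has $n(z) = \ell(z)^{-1}\sum_j w_j^{} f_j^{} \prod_{k\ne j}(z-z_k^{})$ and similarly for $d(z)$, so that $p(z) = \ell(z)n(z)$ and $q(z) = \ell(z)d(z)$ are polynomials of degree at most $m-1$, and $r = p/q$ by \eqref{link}. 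Since $q(z_j^{}) = w_j^{}\prod_{k\ne j}(z_j^{}-z_k^{}) \ne 0$ because $w_j^{}\ne 0$ and the $z_k^{}$ are distinct, $r$ has no pole at $z_j^{}$; and the removable-singularity computation $\lim_{z\to z_j^{}} r(z) = f_j^{}$ (also recorded above) gives the interpolation property.

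For the converse --- the part that carries the real content --- I would take an arbitrary rational function $r$ of type $(m-1,m-1)$ with no poles at $z_1^{},\dots,z_m^{}$ and exhibit data values $f_j^{}$ and nonzero weights $w_j^{}$ realizing it. Write $r = p/q$ with $\deg p, \deg q \le m-1$. Set $w_j^{} = q(z_j^{})/\prod_{k\ne j}(z_j^{}-z_k^{})$, which is the value of the degree-$\le(m-1)$ polynomial $q$ recovered from its Lagrange interpolation data at the $m$ nodes $z_j^{}$; since $q$ has degree at most $m-1$, the partial-fraction identity $q(z)/\ell(z) = \sum_{j=1}^m w_j^{}/(z-z_j^{})$ holds exactly, i.e. $d(z) = q(z)/\ell(z)$. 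Similarly set $g_j^{} = p(z_j^{})/\prod_{k\ne j}(z_j^{}-z_k^{})$, so that $\sum_j g_j^{}/(z-z_j^{}) = p(z)/\ell(z) = n(z)$. Then the barycentric quotient equals $n(z)/d(z) = (p/\ell)/(q/\ell) = p/q = r$. Finally, to put this in the stated form I define $f_j^{} = g_j^{}/w_j^{} = p(z_j^{})/q(z_j^{}) = r(z_j^{})$, which is well-defined precisely because $r$ has no pole at $z_j^{}$, i.e. $q(z_j^{})\ne 0$, which also shows $w_j^{}\ne 0$.

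The main obstacle --- really the only subtle point --- is the possibility that $q(z_j^{}) = 0$ for some $j$ even though $r = p/q$ has no pole there, because of a cancelling zero of $p$. The hypothesis ``no poles at the $z_j^{}$'' is a statement about $r$ as a function, not about a particular representation $p/q$, so one must first pass to a representation in lowest terms (or at least one in which no $z_j^{}$ is a common root of numerator and denominator) before defining the weights. Concretely, if $p$ and $q$ share a factor $(z - z_j^{})$, cancel it; after removing all such common factors at the nodes, the resulting $\tilde q$ still has degree at most $m-1$ and satisfies $\tilde q(z_j^{})\ne 0$ for all $j$, and one applies the construction above to $\tilde p/\tilde q$. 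A small bookkeeping check is that reducing the degree does not break the partial-fraction identities --- it does not, since those hold for any polynomial of degree at most $m-1$ over the fixed node polynomial $\ell$ of degree $m$. With that caveat handled, the argument reduces to the standard fact that a polynomial of degree at most $m-1$ is determined by its values at $m$ points together with the Lagrange/partial-fraction expansion, which I would invoke rather than reprove.
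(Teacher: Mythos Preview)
Your proposal is correct and follows essentially the same route as the paper: the forward direction via $p=\ell n$, $q=\ell d$ and the observation that $q(z_j)=w_j\prod_{k\ne j}(z_j-z_k)\ne 0$, and the converse via the partial-fraction expansions of $p/\ell$ and $q/\ell$ after reducing $r=p/q$ to a representation with no common zeros. The only cosmetic differences are that the paper starts the converse directly with coprime $p,q$ (so your ``obstacle'' paragraph is absorbed into the first line) and cites Henrici for the partial-fraction step, whereas you write out the Lagrange weights $w_j=q(z_j)/\prod_{k\ne j}(z_j-z_k)$ explicitly.
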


\begin{proof}
By (\ref{link}), any quotient $n/d$ as in (\ref{baryagain}) is a rational
function $r$ of type $(m-1,m-1)$.  
Moreover, since $w_j\ne 0$, $d$ has a simple pole at $z_j$ and
$n$ has either a simple pole there (if $f_j^{}\ne 0$) or no pole.  Therefore
$r$ has no pole at $z_j^{}$.

Conversely, suppose $r$ is a rational
function of type $(m-1,m-1)$ with no poles at
the points $z_j^{}$, and write $r = p/q$ where $p$ and $q$ are
polynomials of degree at most $m-1$ with no common zeros. 
Then $q/\ell$ is a rational function with a zero at $\infty$
and simple poles at the points $z_j^{}$.  Therefore $q/\ell$
can be written in the partial fraction form of a denominator
$d$ as in (\ref{baryagain}) with $w_j \ne 0 $ for each $j$ (see
Theorem 4.4h and p.~553 of~\cite{henrici}).
Similarly $p/\ell$ is a rational function with a zero at $\infty$
and simple poles at the points $z_j^{}$ or a subset of them.
Therefore, since $w_j\ne 0$, $p/\ell$
can be written in the partial fraction form of a numerator
$n$ as in (\ref{baryagain}).
\end{proof}

\smallskip

If the support points $\{z_j^{}\}$ have no influence on the set
of functions described by (\ref{baryagain}), one may wonder, what
is the use of barycentric representations?  The answer is all
about numerical quality of the representation and is at the very
heart of why the AAA algorithm is so effective.  The barycentric
formula is composed from quotients $1/(z-z_j^{})$, and for good
choices of $\{z_j^{}\}$, these functions are independent enough
to make the representation well-conditioned --- often far
better conditioned, in particular, than one would find with a
representation $p(z)/q(z)$.  As shown in
section~\ref{comparisons}, they are also better conditioned
that the partial fraction representations used by vector
fitting, since in that case, the points $\{z_j^{}\}$ are 
constrained to be the poles of $r$.

The use of localized and sometimes
singular basis functions is an established theme in other areas of
scientific computing.  {\em Radial basis functions,} for example,
have excellent conditioning properties when they are composed
of pieces that are well separated~\cite{ff}.  In~\cite{dfq}
one even finds a barycentric-style quotient of two RBF sums utilized
with good effect.  Similarly the {\em
method of fundamental solutions,} which has had great success in
solving elliptic PDEs such as Helmholtz problems, represents its
functions as linear combinations of Hankel or other functions
each localized at a singular point~\cite{bb}.  An aim of the
present paper is to bring this kind of thinking to the subject
of function theory.  Yet another related technique in scientific
computing is {\em discretizations of the Cauchy integral formula,}
for example by the trapezoidal rule on the unit circle, to evaluate
analytic functions inside a curve.  The basis functions implicit
in such a discretization are singular, introducing poles and hence
errors of size $\infty$ at precisely the data points where one
might expect the errors to be $0$, but still the approximation
may be excellent away from the curve~\cite{akt,qbx}.

\section{\label{core}Core AAA algorithm}
We begin with a finite {\em sample set\/} $Z \subseteq \C$ of
$M\gg 1$ points.  We assume a function $f(z)$ is given that is
defined at least for all $z\in Z$.  This function may have an analytic
expression, or it may be just a set of data values.

The AAA algorithm takes the form of an iteration for $m = 1, 2, 3,
\dots ,$ with $r$ represented at each step in the barycentric form
(\ref{baryagain}).  At step $m$ we first pick the next support
point $z_m^{}$ by the greedy algorithm to be described below,
and then we compute corresponding weights $w_1^{},\dots, w_m^{}$
by solving a linear least-squares problem over the subset of
sample points that have not been selected as support points,
\begin{equation} \Zm = Z \backslash \{z_1^{},\dots,z_m{}\}.
\end{equation} Thus at step $m$, we compute a rational function $r$
of type $(m-1,m-1)$, which generically will interpolate $f_1^{} =
f(z_1^{}),\dots,f_m^{}= f(z_m^{})$ at $z_1^{},\dots,z_m^{}$ (though
not always, since one or more weights may turn out to be zero).

The least-squares aspect of the algorithm is as follows.
Our aim is an approximation
\begin{equation}
f(z) \approx {n(z) \over d(z)}, \quad z\in Z,
\label{oldfit}
\end{equation}
which in linearized form becomes
\begin{equation}
f(z)d(z) \approx n(z), \quad z \in \Zm.
\label{newfit}
\end{equation}
Note that in going from
(\ref{oldfit}) to (\ref{newfit}) we have replaced
$Z$ by $\Zm$, because $n(z)$ and
$d(z)$ will generically have poles at $z_1^{},\dots,z_m^{}$,
so (\ref{newfit}) would not make sense over all $z\in Z$.
The weights $w_1^{},\dots, w_m^{}$ are chosen to solve
the least-squares problem
\begin{equation}
\hbox{minimize\kern 1pt } \| fd - n\|_{\Zm}^{}, \quad \|w\|_m^{}=1,
\label{l2prob}
\end{equation}
where $\|\cdot\|_{\Zm}^{}$ is the discrete 2-norm over $\Zm$ and
$\|\cdot\|_m^{}$ is the discrete 2-norm on $m$-vectors.
To ensure that this problem makes sense, we assume that $\Zm$ has at least
$m$ points, i.e., $m \le M/2$.

The greedy aspect of the iteration is as follows.  At step $m$, the
next support point $z_m^{}$ is chosen as a point $z\in Z^{(m-1)}$
where the nonlinear residual $f(z)-n(z)/d(z)$ at step $m-1$
takes its maximum absolute value.

Assuming the iteration is successful, it terminates when the
nonlinear residual is sufficiently small; we have found it
effective to use a default tolerance of $10^{-13}$ relative to the
maximum of $|f(Z)|$.  The resulting approximation typically has
few or no numerical Froissart doublets, and if there are any,
they can usually be removed by one further least-squares step
to be described in Section~\ref{removing}.  (If the convergence
tolerance is too tight, the approximation will stagnate and many
Froissart doublets will appear.)  In the core AAA algorithm it
is an approximation of type $(m-1,m-1)$.

It remains to spell out the linear algebra involved in (\ref{l2prob}).
Let us regard $\Zm$ and $\Fm = f(\Zm)$ as column vectors,
\begin{displaymath}
\Zm = (Z_1^{(m)},\dots, Z_{M-m}^{(m)})^T, \quad
\Fm = (F_1^{(m)},\dots, F_{M-m}^{(m)})^T.
\end{displaymath}
We seek a normalized column vector
\begin{displaymath}
w = (w_1^{},\dots, w_m^{})^T, \quad \|w\|_m = 1
\end{displaymath}
that minimizes the 2-norm of the $(M-m)$-vector
\begin{displaymath}
\sum_{j=1}^m {w_j^{} F_i^{(m)}\over Z_i^{(m)}-z_j} -
\sum_{j=1}^m {w_j^{} f_j^{}\over Z_i^{(m)}-z_j},
\end{displaymath}
that is,
\begin{displaymath}
\sum_{j=1}^m {w_j^{} (F_i^{(m)}-f_j^{})\over Z_i^{(m)}-z_j}.
\end{displaymath}
This is a matrix problem of the form
\begin{equation}
\hbox{minimize \kern 1pt} \|\Am w\|_{M-m}^{}, \quad \|w\|_m^{}=1,
\label{minprob}
\end{equation}
where $\Am $ is the $(M-m)\times m$ {\em Loewner matrix}~\cite{antoulas}
\begin{equation}
\Am  = 
\pmatrix{\displaystyle{F_1^{(m)}-f_1^{}\over Z_1^{(m)}-z_1^{}} &
\cdots & \displaystyle{F_1^{(m)}-f_m^{}\over Z_1^{(m)}-z_m^{}} \cr
\noalign{\vskip .3in}
\vdots &\ddots & \vdots \cr
\noalign{\vskip .3in}
\displaystyle{F_{M-m}^{(m)}-f_1^{}\over Z_{M-m}^{(m)}-z_1^{}}
& \cdots & \displaystyle{F_{M-m}^{(m)}-f_m^{}\over Z_{M-m}^{(m)}-z_m^{}} }.
\label{nla}
\end{equation}
We will solve (\ref{minprob}) using the singular value decomposition (SVD),
taking $w$ as the final right singular vector in a reduced
SVD $\Am = U\Sigma V^*$.
(The minimal singular value of $\Am$ might be nonunique or
nearly so, but our algorithm does not rely on its uniqueness.)
Along the way it is convenient to make use of
the $(M-m)\times m$ {\em Cauchy matrix}
\begin{equation}
C = 
\pmatrix{\displaystyle{1\over Z_1^{(m)}-z_1^{}} &
\cdots & \displaystyle{1\over Z_1^{(m)}-z_m^{}} \cr
\noalign{\vskip .3in}
\vdots & \ddots & \vdots \cr
\noalign{\vskip .3in}
\displaystyle{1\over Z_{M-m}^{(m)}-z_1^{}}
& \cdots & \displaystyle{1\over Z_{M-m}^{(m)}-z_m^{}}},
\label{Cmatrix}
\end{equation}
whose columns define the basis in which we approximate.
If we define diagonal left and right scaling matrices by
\begin{equation}
S_F^{} = \diag(F_1^{(m)},\dots,F_{M-m}^{(m)}), \quad
S_f^{} = \diag(f_1^{},\dots,f_m^{}),
\end{equation}
then we can construct $\Am $ from $C$ using the identity
\begin{equation}
\Am  = S_F^{}C - CS_f^{},
\label{AfromC}
\end{equation}
and once $w$ is found with the SVD, we can compute 
$(M-m)$-vectors $N$ and $D$ with
\begin{equation}
N = C(wf),  \quad D = Cw.
\end{equation}
These correspond to the values of $n(z)$ and $d(z)$ at points
$z\in \Zm$.  (Since $M$ will generally be large, it is important
that the sparsity of $S_F^{}$ is exploited in the multiplication of
(\ref{AfromC}).)  Finally, to get an $M$-vector $R$ corresponding
to $r(z)$ for all $z\in Z$, we set $R = f(Z)$ and then $R(\Zm) = N/D$.

After the AAA algorithm terminates (assuming $w_j^{}\ne 0$ for all $j$),
one has a rational approximation
$r(z) = n(z)/d(z)$ in barycentric form.  The zeros of $d$, which are
(generically) the poles of~$r$, can be computed by solving
an $(m+1)\times (m+1)$ generalized eigenvalue
problem in arrowhead form~\cite[Sec.~2.3.3]{klein},
\begin{equation}
\def\crr{\cr\noalign{\vskip 2pt}}
\pmatrix{0& w_1^{} & w_2^{} & \cdots & w_m^{} \crr
1 & z_1^{} \crr
1 & & z_2^{} \crr
\vdots & & & \ddots \crr
1 & & & & z_m^{}}
= \lambda
\pmatrix{0\crr
&1 \crr
& & 1 \crr
& & & \ddots \crr
& & & & 1 } .
\end{equation}
At least two of the eigenvalues of this problem are infinite, and
the remaining $m-1$ are the zeros of $d$.  A similar computation
with $w_j^{}$ replaced by $w_j^{}f_j^{}$ gives the zeros of $n(z)$.

The following proposition collects some elementary properties of
the core AAA algorithm.  We say ``a'' instead of ``the'' in view
of the fact that in cases of ties in the greedy choice at each
step, AAA approximants are not unique.

\begin{proposition}
\label{prop}
Let $r(z)$ be a AAA approximant at step\/ $m$ of a function
$f(z)$ on a set $Z$ (computed in exact arithmetic).  The following
statements refer to AAA approximants at step $m$, and $a$ and $b$
are complex constants.

{\em Affineness in $f$.}  
For any $a\ne 0$ and $b$,
$ar(z) + b$ is an approximant of $af(z) + b$ on $Z$.

{\em Affineness in $z$.}  
For any $a\ne 0$ and $b$,
$r(az+b)$ is an approximant of $f(az+b)$ on $(Z-b)/a$.

{\em Monotonicity.}  
The linearized residual norm
$\sigma_{\min}(\Am  )= \|fd-n\|_{Z^{(m)}}^{}$ is a nonincreasing
function of~$m$.
\end{proposition}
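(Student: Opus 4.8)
The plan is to prove the three parts in the order listed, since the first two are essentially bookkeeping while the third requires a genuine (if short) argument about how the Loewner matrices at consecutive steps are related. For \emph{affineness in $f$}, I would observe that replacing $f$ by $af+b$ replaces each data value $f_j$ by $af_j+b$ and each value $F_i^{(m)}$ by $aF_i^{(m)}+b$; plugging these into the Loewner matrix \eqref{nla}, the constant $b$ cancels in every entry $((aF_i+b)-(af_j+b))/(Z_i-z_j) = a(F_i-f_j)/(Z_i-z_j)$, so $A^{(m)}$ is simply scaled by $a\ne 0$. Hence the least-squares problem \eqref{minprob}, the optimal $w$, and therefore the greedy choices of support points are all unchanged, and the resulting barycentric function with data values $af_j+b$ is exactly $ar(z)+b$ by linearity of \eqref{baryagain} in the pairs $(f_j)$ together with the fact that $\sum_j w_j/(z-z_j)$ divided into itself gives $1$. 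For \emph{affineness in $z$}, the substitution $z\mapsto az+b$ maps the sample set $Z$ to $(Z-b)/a$ and the support points $z_j$ to $(z_j-b)/a$; each Cauchy/Loewner entry transforms as $1/((aZ_i'+b)-(az_j'+b)) = a^{-1}/(Z_i'-z_j')$, so again $A^{(m)}$ is scaled by a nonzero constant, the SVD-optimal $w$ is unchanged, and one checks directly that the barycentric formula in the new variable reproduces $r(az+b)$.

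For \emph{monotonicity}, the key point is the identity $\sigma_{\min}(A^{(m)}) = \|fd-n\|_{Z^{(m)}}$, which holds because $w$ is chosen as the smallest right singular vector of $A^{(m)}$ and $\|A^{(m)}w\|_{M-m} = \|fd-n\|_{Z^{(m)}}$ by construction of the Loewner matrix. I would then compare step $m$ with step $m+1$. At step $m+1$ we have one more support point $z_{m+1}$ and one fewer sample point in $Z^{(m+1)} = Z^{(m)}\setminus\{z_{m+1}\}$. Consider the step-$m$ optimal weight vector $w\in\mathbb{C}^m$ and extend it to $\tilde w = (w_1,\dots,w_m,0)^T\in\mathbb{C}^{m+1}$ with a zero in the new coordinate. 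Then the barycentric function it defines is unchanged, so $\|A^{(m+1)}\tilde w\|_{M-m-1}$ equals the $2$-norm of the residual $fd-n$ evaluated over $Z^{(m+1)}$, which is the norm over $Z^{(m)}$ with one term (the one at $z_{m+1}$) deleted; deleting a term can only decrease a $2$-norm. Since $\|\tilde w\|_{m+1} = \|w\|_m = 1$, the vector $\tilde w$ is a feasible competitor in the step-$(m+1)$ minimization, so $\sigma_{\min}(A^{(m+1)}) \le \|A^{(m+1)}\tilde w\|_{M-m-1} \le \|A^{(m)}w\|_{M-m} = \sigma_{\min}(A^{(m)})$.

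The main obstacle, such as it is, lies in being careful about two bookkeeping subtleties in the monotonicity argument: first, that the residual quantity whose norm we are bounding really is the \emph{same function} $n/d$ evaluated at different point sets — this is why padding $w$ with a zero (rather than re-solving) is the right move, since a zero weight $w_{m+1}=0$ means $z_{m+1}$ does not actually enter the formula \eqref{baryagain}; and second, that the greedy choice of $z_{m+1}$ as a point of $Z^{(m)}$ is consistent with $\tilde w$ being admissible at step $m+1$, i.e.\ that $Z^{(m+1)}$ is genuinely obtained from $Z^{(m)}$ by removing exactly the one new support point. Neither is deep, but both need to be stated cleanly. Everything else is linear algebra already set up in the text, so the proof is short.
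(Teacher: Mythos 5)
Your proposal is correct and follows essentially the same route as the paper: the affineness parts are the same straightforward scaling observations the paper leaves unspelled, and your monotonicity argument—pad the optimal weight vector with a zero, note that the next Loewner matrix arises by deleting one row and appending one column, so the padded vector is a feasible competitor whose residual norm only drops—is exactly the paper's argument. No issues.
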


\begin{proof}
These properties are straightforward and we do not spell out
the arguments except to note that the monotonicity property
follows from the fact that $\Am $ is obtained from $A^{(m-1)}$
by deleting one row and appending one column.  Since the minimum
singular vector for $A^{(m-1)}$ (padded with one more zero)
is also a candidate singular vector of $\Am $, we must have
$\sigma_{\min{}}(\Am ) \le\sigma_{\min{}}(A^{(m-1)})$.
\end{proof}

One might ask, must the monotonicity be strict, with
$\sigma_{\min{}}(\Am ) <\sigma_{\min{}}(A^{(m-1)})$
if $\sigma_{\min{}}(A^{(m-1)})\ne 0\kern .7pt$?
So far as we are aware, the answer is no.  An equality
$\sigma_{\min{}}(\Am ) = \sigma_{\min{}}(A^{(m-1)})$ implies
$f(z_m^{})d_{m-1}^{}(z_m^{}) =n_{m-1}^{}(z_m^{})$, where
$z_m^{}$ is the support point selected at step $m$.  This in
turn implies $d_{m-1}^{}(z_m^{}) =n_{m-1}^{}(z_m^{}) = 0$,
since otherwise we could divide by $d_{m-1}^{}(z_m^{})$ to find
$f(z_m^{})-n_{m-1}^{}(z_m^{})/d_{m-1}^{}(z_m^{}) = 0$, which
would contradict the greedy choice of $z_m^{}$.  But so far as
we know, the possibility $d_{m-1}^{}(z_m^{}) =n_{m-1}^{}(z_m^{})
= 0$ is not excluded.

Since the AAA algorithm involves SVDs of dimensions $(M-j)\times
j$ with $j = 1,2,\dots ,m,$ its complexity is $O(Mm^3)$ flops.
This is usually modest since in most applications $m$ is small.

\section{Matlab code}

\begin{figure}
{\footnotesize
\verbatiminput{aaa_alg.m}
\par}
\medskip
\caption{\label{codefig} Matlab code for the AAA algorithm,
returning\/ $r$ as a function handle.  The optional code {\tt cleanup} is
listed in Section~$\ref{removing}$.  Commenting out this line gives
the ``core'' AAA algorithm.}
\end{figure}

The Matlab code {\tt aaa.m}, shown in Figure~\ref{codefig} and
available in Chebfun~\cite{chebfun}, is intended to be readable
as well as computationally useful and makes the AAA algorithm
fully precise.  The code closely follows the algorithm description
above, differing in just one detail for programming convenience:
instead of working with a subset $\Zm$ of size $M-m$ of $Z$, we
use an index vector $J$ that is a subset of size $M-m$ of $1{:}M$.
The matrices $C$ and $A$ always have $M$ rows, and the SVD is
applied not to all of $A$ but to the submatrix with rows indexed
by $J$.

We have modeled {\tt aaa} on the code {\tt ratdisk} of~\cite{gpt},
and in particular, the object~{\tt r} that it returns is a
function handle that can evaluate $r$ at a scalar, vector, or
matrix of real or complex numbers.  The code is flexible enough
to cover a wide range of computations, and was used for the
applications of Section~\ref{sec-applics}.  The code as listed does
not include the safeguards needed in a fully developed
piece of software, but its Chebfun realization (version 5.6.0,
December 2016) does have some of these features.

\begin{figure}
\begin{center}
\vskip .2in
\includegraphics[scale=.55]{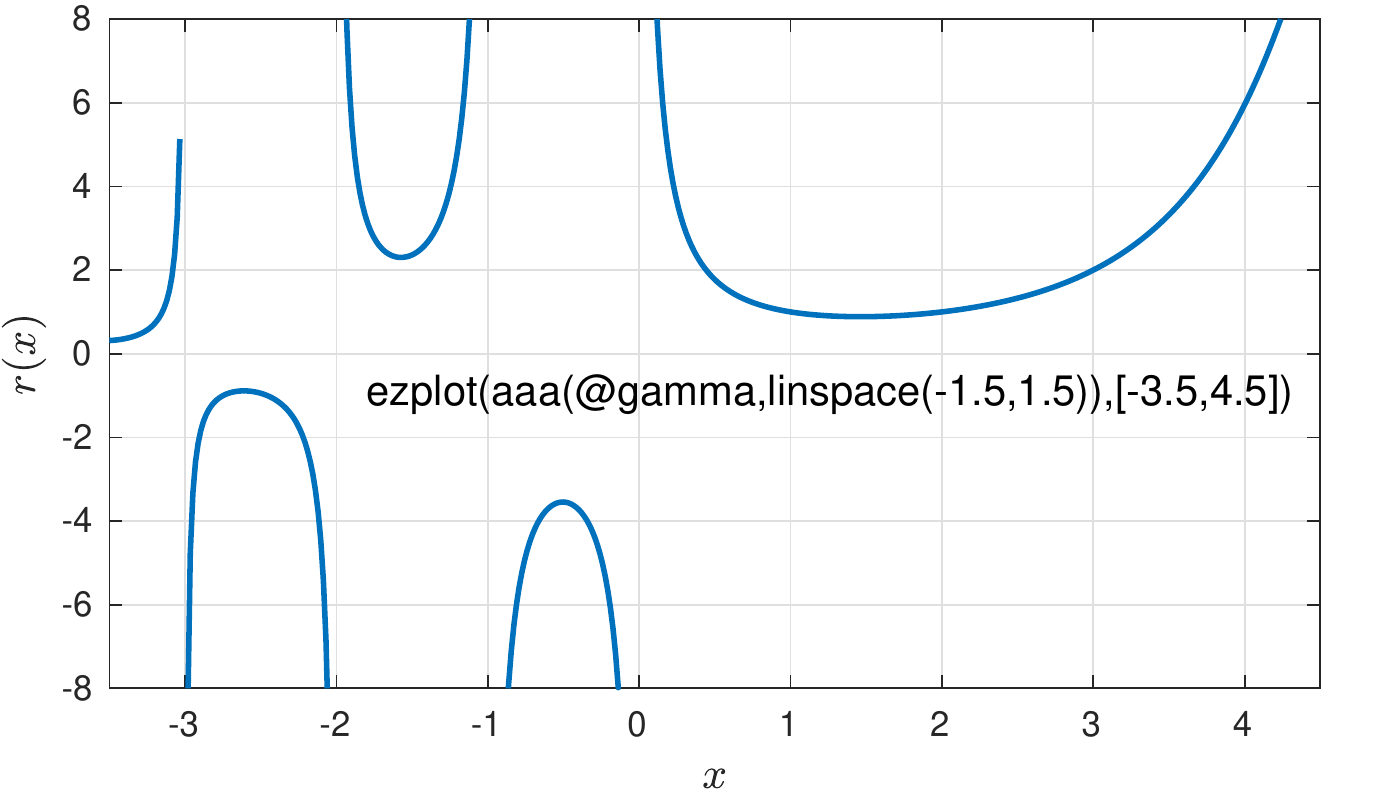}
\vskip -.1in
\end{center}
\caption{\label{ezfig} AAA approximation to
$f(x) = \Gamma(x)$ produced by the command shown.
This is a rational approximant of type $(9,9)$ whose
first four poles match $0$, $-1$, $-2$, and $-3$ to
$15$, $15$, $7$, and $3$ digits, respectively, with
corresponding residues
matching $1$, $-1$, $1/2$, and $-1/6$ to similar accuracy.}
\end{figure}

Computing a AAA approximation with {\tt aaa} can be as simple as
calling {\tt r = aaa(f,Z)} and then evaluating {\tt r(z)}, where
{\tt z} is a scalar or vector or matrix of real or complex numbers.
For example, Figure~\ref{ezfig} shows a good approximation of
the gamma function on $[-3.5,4.5]$ extrapolated from 100 samples
in $[-1.5,1.5]$, all invoked by the line of Matlab displayed in
the figure.

For a second illustration, shown in Figure~\ref{fig1}, we call
{\tt aaa} with the sequence

{\small
\begin{verbatim}

Z = exp(linspace(-.5,.5+.15i*pi,1000));
F = @(z) tan(pi*z/2);
[r,pol,res,zer] = aaa(F,Z);

\end{verbatim}
\par}

\noindent
The set $Z$ is a spiral of 1000 points winding $7 {1\over 2}$ times
around the origin in the complex plane.  When the code is executed,
it takes $m=12$ steps to convergence with the following errors:

{\small
\begin{verbatim}

2.49e+01, 4.28e+01, 1.71e+01, 8.65e-02, 1.27e-02, 9.91e-04,
5.87e-05, 1.29e-06, 3.57e-08, 6.37e-10, 1.67e-11, 1.30e-13

\end{verbatim}
\par}

\begin{figure}
\begin{center}
\vskip .2in
\includegraphics[scale=.45]{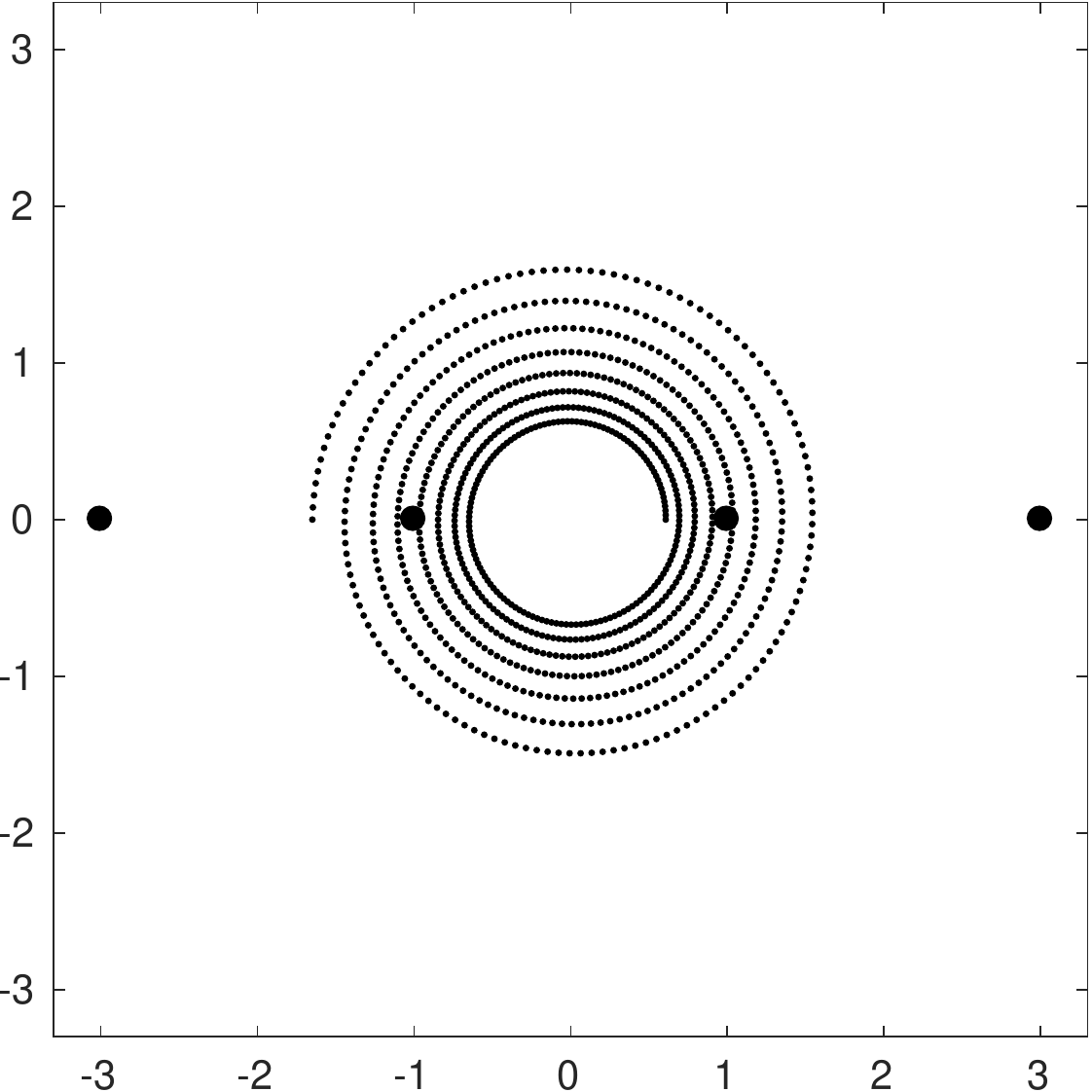}
\end{center}
\caption{\label{fig1} Approximation of $f(z) = \tan(\pi z/2)$ on
a set of\/ $1000$ points lying along a spiral in the complex plane.  
The inner pair of computed poles (dots) 
match the corresponding poles of $f$ to $15$ digits, and the second
pair match to $7$ digits.}
\end{figure}

\noindent  The first pair
of poles of $r$ match the poles of $f$ at $\pm 1$ to 15 digits, the next
pair match $\pm 3$ to 7 digits, and the third pair match $\pm 5$
to 3 digits.  The zeros of $r$ show a
similar agreement with those of $f$.

{\small
\begin{verbatim}

         Poles                                Zeros
  1.00000000000000 + 9.01e-17i        -0.00000000000000 - 2.14e-15i
 -1.00000000000000 + 5.55e-19i         2.00000000000839 - 4.77e-12i
  3.000000100      - 4.09e-08i        -2.00000000000043 - 4.38e-12i
 -3.000000065      - 5.83e-08i         4.0000427        - 1.25e-05i
  5.002693         - 5.61e-04i        -4.0000355        - 1.73e-05i
 -5.002439         - 7.47e-04i         6.0461           - 6.74e-03i
  7.3273           - 3.46e-02i        -6.0435           - 8.67e-03i
 -7.3154           - 4.29e-02i         9.387            - 1.17e-01i
 13.65             - 3.74e-01i        -9.350            - 1.39e-01i
-13.54             - 3.74e-01i       -26.32             - 1.85e+00i
-47.3              - 3.60e+02i        26.81             - 1.80e+00i

\end{verbatim}
\par}

\section{\label{removing}Removing numerical Froissart doublets}
In many applications, the AAA algorithm finishes with a clean
rational approximation $r$, with no numerical Froissart doublets.
Sometimes, however, these artifacts appear, and one can always
generate them by setting the convergence tolerance to $0$ and
taking the maximal step number {\tt mmax} to be sufficiently large.
Our method of addressing this problem, implemented in the code
{\tt cleanup} listed in Figure~\ref{codefig2}, is as follows.
We identify spurious poles by their residues $< 10^{-13}$, remove
the nearest support points from the set of support points, and
then solve the least-squares problem (\ref{l2prob}) by a new
SVD calculation.

For an example from~\cite{gpt}, suppose the function $f(z) =
\log(2+z^4)/(1-16z^4)$ is approximated in 1000 roots of unity
with tolerance~$0$.  The left image of Figure~\ref{spurious} shows
what happens if this is done with {\tt cleanup} disabled: the code
runs to $m=100$, at which point there are 58 numerical Froissart
doublets near the unit circle, marked in red.  The right image,
with {\tt cleanup} enabled, shows just one Froissart doublet,
an artifact that appeared when the first 58 were eliminated.
The other poles comprise four inside the unit circle, matching
poles of $f$, and collections of poles lining up along branch
cuts of $f$; see Application~\ref{sec-applics}.2.

\begin{figure}
{\footnotesize
\hspace{1in}\verbatiminput{cleanup.m}
\par}
\medskip
\caption{\label{codefig2} Matlab code for removing numerical Froissart doublets.}
\end{figure}

\begin{figure}
\begin{center}
\vskip .2in
\includegraphics[scale=.75]{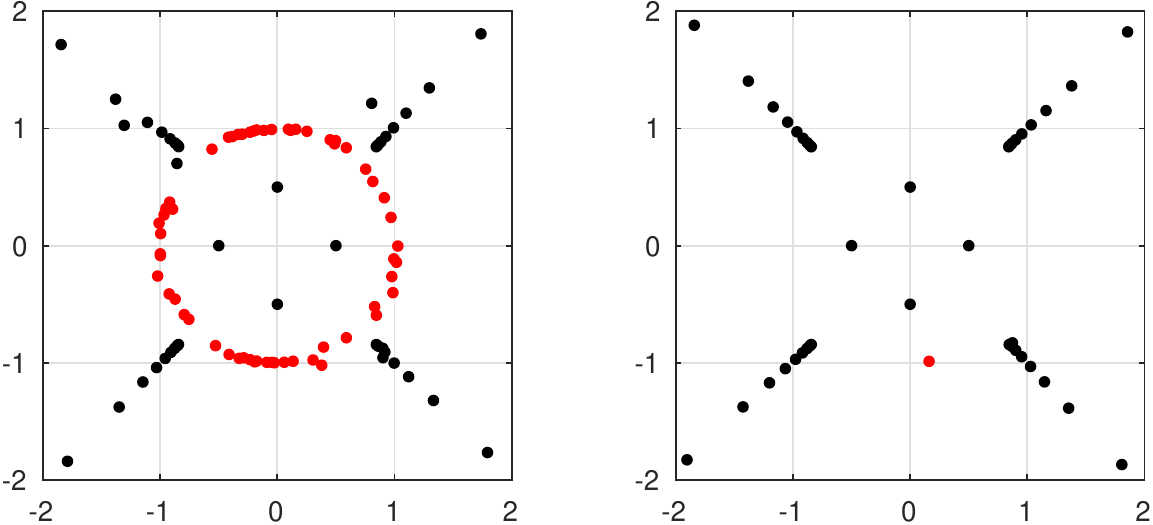}
\end{center}
\caption{\label{spurious} If the AAA algorithm is run with ${\tt tol} =0$ and
with the {\tt cleanup} option disabled, as in the left image,
numerical Froissart doublets appear.  Here we see approximation
of $f(z) = \log(2+z^4)/(1-16z^4)$ in $1000$ roots of unity, with
red dots marking poles with residues of absolute value $<10^{-13}$. 
The right image shows that if {\tt cleanup} is enabled, there is
just one doublet.
If the algorithm is run with its default tolerance $10^{-13}$, then
the result is much the same but with no numerical doublets at all.}
\end{figure}

The introduction cited a number of
papers related to the problem of Froissart
doublets~\cite{bz13,froissart,gk03,gp97,gp99,stahlspurious}.
This literature is mainly concerned with Pad\'e approximation
(rational approximation of a Taylor series at a point) and with
representations $p/q$, but \cite{bz13} discusses Froissart doublets
in the context of barycentric representations.

\section{\label{sec-applics}Applications}
In this section we present nine applications computed with
{\tt aaa}.  Different applications raise different mathematical
and computational issues, which we discuss in some generality
in each case while still focusing on the particular example for
concreteness.  Application~\ref{sec-applics}.6 is the only one
in which any numerical Froissart doublets were found (6 of them);
they are removed by {\tt cleanup}.

\subsection{\label{sec-app1}Analytic functions in the unit disk}
Let $\Delta$ be the closed unit disk $\Delta = \{z\in {\bf
C}: \, |z|\le 1\}$, and suppose $f$ is analytic on $\Delta$.
Then $f$ can be approximated on $\Delta$ by polynomials,
with exponential convergence as $n\to\infty$, where $n$ is
the degree, and if $f$ is not entire, then the asymptotically
optimal rate of convergence is $\Oe(\kern .5pt \rho^{-n})$,
where $\rho $ is the radius of the disk of analyticity of $f$
about $z=0$.\footnote{We use the symbol $\Oe$ defined as follows:
$g(n) = \Oe(\kern .5pt \rho^{-n})$ if for all $\varepsilon>0$,
$g(n) = O((\kern .5pt \rho -\varepsilon)^{-n})$ as $n\to\infty$.
This enables us to focus on exponential convergence rates without
being distracted by lower-order algebraic terms.} Truncated
Taylor series achieve this rate, as do interpolants in roots of
unity or in any other point sets uniformly distributed on the
unit circle as $n\to\infty$~\cite{gaier,walsh}.  Algorithms for
computing polynomial interpolants in roots of unity are discussed
in~\cite{akt}.  By the maximum modulus principle, any polynomial
approximation on the unit circle will have the same maximum error
over the unit disk.

\begin{figure}
\begin{center}
\vskip .2in
\includegraphics[scale=.6]{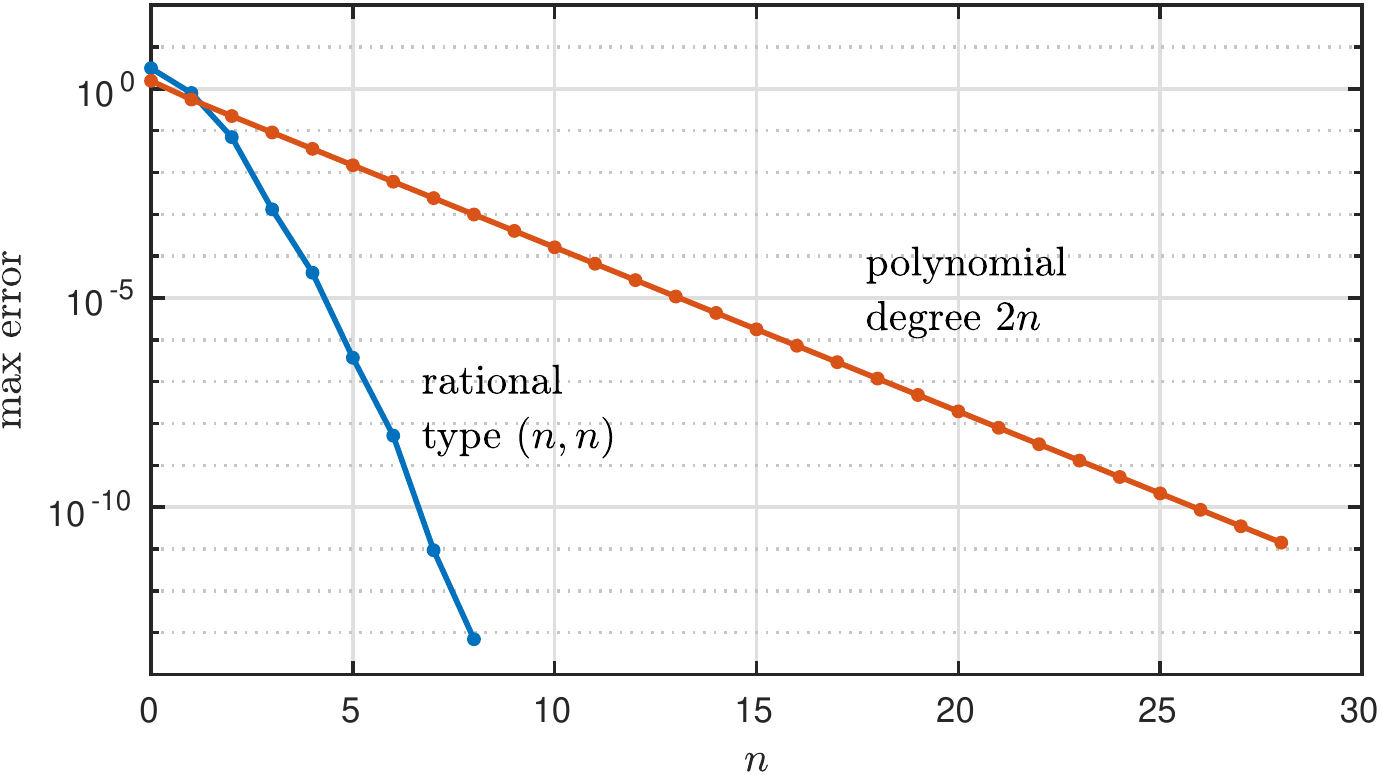}
\end{center}
\caption{\label{figapp1} Application $\ref{sec-applics}.1$.\ \ Even
for an analytic function on
the unit disk, rational approximations may be more
efficient than polynomials.  Here $f(z) = \tan(z)$ is
approximated by the AAA algorithm in $128$ points on the unit circle.
In this and subsequent figures, ``max error''
refers to the maximum error over the discrete approximation set $Z$.}
\end{figure}

Rational approximations may be far more efficient, however,
depending on the behavior of $f$ outside $\Delta$.  For example,
consider the function $f(z) = \tan(z)$.  Because of the poles
at $\pm \pi /2 \approx 1.57$, polynomial approximations can
converge no faster than $\Oe(1.57^{-n})$, so that $n\ge 52$
will be required, for example, for 10-digit accuracy.  Rational
approximations can do much better since they can capture poles
outside the disk.  Figure~\ref{figapp1} shows maximum-norm
approximation errors for polynomial and type $(\nu,\nu)$ rational
AAA approximation of this function in $M = 128$ equispaced points
on the unit circle.  (The results would be much the same with
$M=\infty$.)  We see that the rational approximation of type
$(7,7)$ is more accurate than the polynomial of degree~$52$.

In principle, there is no guarantee that AAA rational
approximations must be free of poles in the interior of $\Delta$.
In this example no such poles appear at any step of the iteration,
however, so the errors would be essentially the same if measured
over the whole disk.  Incidentally, one may note that since $f$
is an odd function with no poles in the disk, it is natural to
approximate it by rational functions of type $(\kern .5pt \mu,\nu)$
with $\nu$ even.  This is approximately what happens as the
algorithm is executed, for each approximation of type $(m-1,m-1)$
with $m-1$ odd has a pole close to $\infty$; in that sense the
rational function is nearly of type $(m-1,m-2)$.  For $m-1 = 3$
there is a pole at $5.4\times 10^{14}$, for $m-1 = 5$ there is a
pole at $8.8\times 10^{11}$, and so on.  The remaining poles are
real numbers lying in approximate plus/minus pairs about $z=0$,
with the inner pair closely approximating $\pm \pi/2$ for $m-1 =
6$ and higher.  See Section~\ref{sec-symm} for further remarks
about approximating even and odd functions.

This experiment confirms that rational functions may have
advantages over polynomials on the unit disk, even for
approximating analytic functions.  A different question is, how
does the AAA approximant compare with rational approximations
computed by other methods such as the {\tt ratdisk} algorithm
of~\cite{gpt}?  For this example, other algorithms can do about
equally well, but the next example is different.
In Section~\ref{comparisons} we make comparisons with the algorithm
known as vector fitting.

\subsection{\label{sec-app2}Analytic functions with nearby branch points;
comparison with ratdisk}
When the singularities of a function $f(z)$ outside of the disk
are just poles, a rational approximation on the unit circle can
``peel them off'' one by one, leading to supergeometric convergence
as in Figure~\ref{figapp1}.  Branch point singularities are not so
simple, but as is well known, rational approximation is effective
here too.  For example, Figure~\ref{figapp2} shows polynomial
and AAA rational approximations to $f(z) = \log(1.1-z)$ in 256
roots of unity.  The degree $2n$ polynomials converge at the rate
$\Oe(1.1^{-2n})$, while the rational approximations converge at
a rate $\Oe(\rho^{-n})$ with $\rho \approx 9.3$.
The precise constant for best rational approximations can be
determined by methods of potential theory due to Gonchar,
Parfenov, Prokhorov, and Stahl and summarized in the final paragraph
of~\cite{levinsaff}.  In fact $\rho = R^2$, where $R\approx 3.046$
is the outer radius of the annulus with inner radius~$1$ that is conformally
equivalent to the domain exterior to the unit disk and the slit $[1.1,\infty)$.
A formula for $R$ is given on p.~609 of~\cite{gk}:
\begin{equation}
R = \exp\Big({\pi K(\sqrt{1-\kappa^2}\kern 1pt)\over 4 K(\kappa)}\Big), \quad
\kappa = (c-\sqrt{c^2-1}\kern 1pt)^2, \quad c = 1.1,
\label{elliptic}
\end{equation}
where $K(\kappa)$ is the complete elliptic integral of the first kind.

\begin{figure}
\begin{center}
\vskip .2in
\includegraphics[scale=.60]{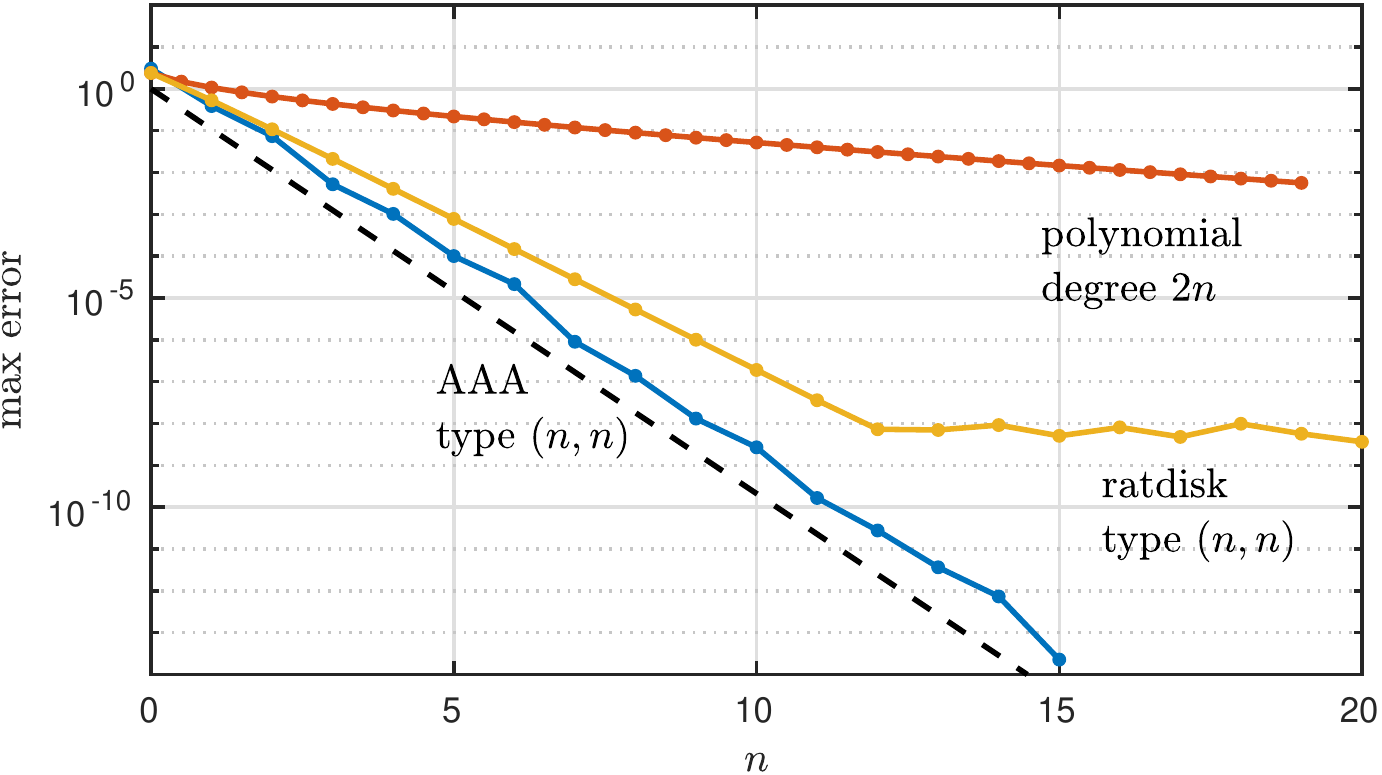}
\end{center}
\caption{\label{figapp2} Application $\ref{sec-applics}.2$.\ \ Rational functions are
also effective for approximating $f(z) = \log(1.1-z)$ in $256$ points
on the unit circle.  This function has a branch point outside
the unit disk instead of poles.  The slope of the dashed line marks the
convergence
rate for best type $(n,n)$ approximations on the unit disk.}
\end{figure}

It is well known that when rational functions approximate functions
with branch cuts, most of the poles tend to line up along branch
cuts~\cite{aptek,stahl}.  This effect is shown in Figure~\ref{phasefig}.
Apart from
the zero near $0.1$ that matches the zero there of $f$,
the zeros and poles of $r$ interlace along a curve close to
the branch cut $[1,\infty)$.  The fact that they end up
slightly below rather than above the cut
is of no significance except in illustrating that the core AAA
algorithm does not impose a real symmetry. 
As it happens, for this problem the first support point
is $z_1^{}=1$, the second is $z_2^{} = -1$, and then the real symmetry is
broken at step 3 with the selection of $z_3^{}\approx 0.87-0.49i$.
\begin{figure}
\begin{center}
\vskip .4in
\includegraphics[scale=.75]{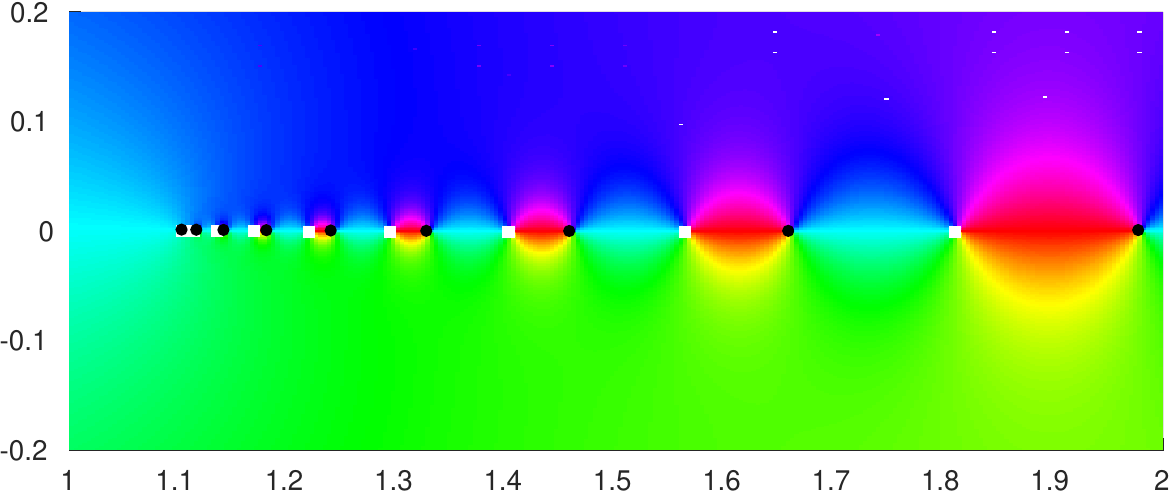}
\end{center}
\caption{\label{phasefig} Application $\ref{sec-applics}.2$, continued.
A phase portrait~{\rm \cite{wegert}} of the
rational approximant\/ $r$ shows zeros (black dots) and 
poles (white squares) interlacing along the branch cut $[1,\infty)$.}
\end{figure}

Figure~\ref{figapp2} also includes a third curve showing
convergence for a technique based on representation of a
rational function by a quotient of polynomials $p(z)/q(z)$
--- the {\tt ratdisk} approximant of~\cite{gpt}, computed
by linearized least-squares minimization of $\|f q -
p\|_Z^{}$ where $Z$ consists of the $256$ roots of unity.
(In Chebfun we calculate this with{\tt [p,q,r] = ratinterp(@(z)
log(1.1-z),n,n,256,'unitroots')}.)  This linearized problem uses
bases $1, z, z^2,\dots$ in the numerator and the denominator,
whereas AAA uses partial fraction bases, and that is why the
rational approximants need not match even for small $n$, when
rounding error is not an issue.  For larger~$n$, {\tt ratdisk}
additionally encounters numerical instability because its numerator
$p$ and denominator $q$ both become very small for values of $z$
near the branch cut, leading to cancellation error.

\subsection{\label{sec-app3}Meromorphic functions in the unit
disk from boundary values}\ Now suppose $f$ is analytic on the
unit circle and meromorphic in $\Delta$, which means, it is
analytic in $\Delta$ apart from poles, which will necessarily be
finite in number.  If $\tau$ is the number of poles counted with
multiplicity, then $f$ can be approximated on the unit circle by
rational functions of type $(\kern .5pt\mu,\nu)$ as $\mu\to\infty$
for any $\nu \ge \tau$.  For $\nu < \tau$ this is not possible in
the sense of approximation to arbitrary accuracy, but that is not
the whole story, because one may be interested in a fixed accuracy
such as 8 or 16 digits.  Suppose for example that $f(z)$ is
nonzero for $|z|=1$ with winding number $-q$ there for some $q>0$,
which implies that the number of poles in the unit disk exceeds
the number of zeros by $q$.  Then no type $(\kern .5pt\mu,\nu)$
rational function $r$ with $\nu < q$ can differ from $f$ on $|z|=1$
by less than $\min_{|z|=1}^{} |f(z)|$, because such an $r$ would
have to have winding number $-q$ too (this is Rouch\'e's theorem).
On the other hand for $\nu\ge q$, a rational approximation can
get the winding number right and good accuracy may be possible.

\begin{figure}
\begin{center}
\vskip .2in
\includegraphics[scale=.6]{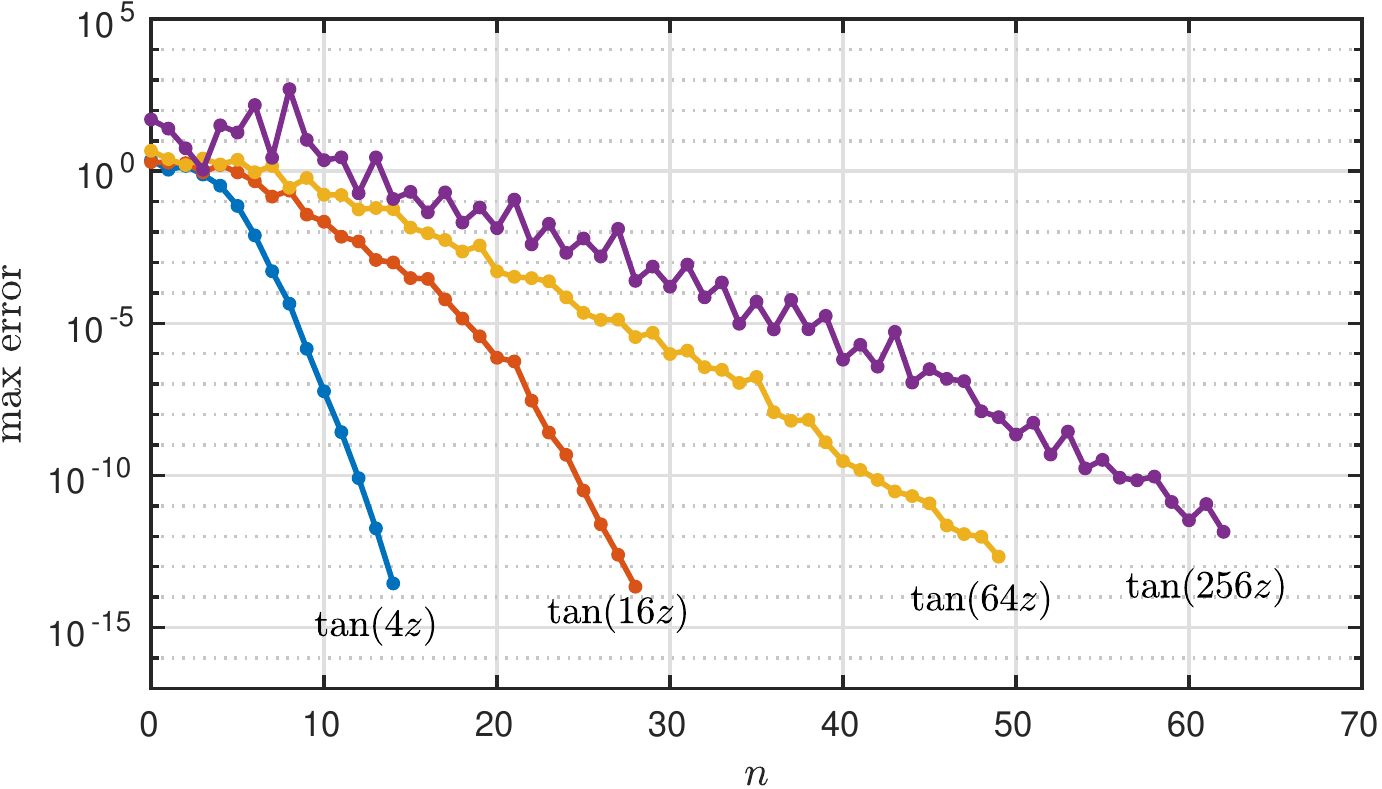}
\end{center}
\caption{\label{figapp3} Application $\ref{sec-applics}.3$.\ \ Errors in
type $(n,n)$ AAA approximation of\/ $\tan(\beta z)$ with
$\beta  = 4, 16, 64, 256$ in\/ $1000$ points on the unit circle.}
\end{figure}

For example, Figure~\ref{figapp3} shows errors in AAA approximation
of $\tan(\beta z)$ with $\beta = 4, 16, 64, 256$ in 1000 equispaced
points on the unit circle.  Robust superlinear convergence is
observed in all cases.  Now these four functions have 2, 10, 40,
and 162 poles in the unit disk, respectively, and from the figure
we see that they are approximated on the unit circle to 13-digit
accuracy by rational functions of types $(14,14)$, $(28,28)$,
$(49,49)$ and $(62,62)$.  Clearly the fourth approximation, at
least, cannot be accurate throughout the unit disk, since it does
not have enough poles.  The right image of Figure~\ref{figapp3b}
confirms that it is accurate near the unit circle, but not
throughout the disk.  By contrast the left image of the figure,
for the simpler function~$f$ with $\beta=16$, shows good accuracy
throughout the disk.  This example highlights fundamental facts
of rational approximation: one cannot assume without careful
reasoning that an approximation accurate on one set must be
accurate elsewhere, even in a region enclosed by a curve,
nor that it provides information about zeros or poles of the
function being approximated away from the set of approximation.
An approximation on a closed curve does however tell you the
winding number, hence the {\em difference} between numbers of
poles and zeros of the underlying function in the region enclosed.

\begin{figure}
\vskip .2in
\begin{center}
\includegraphics[scale=.8]{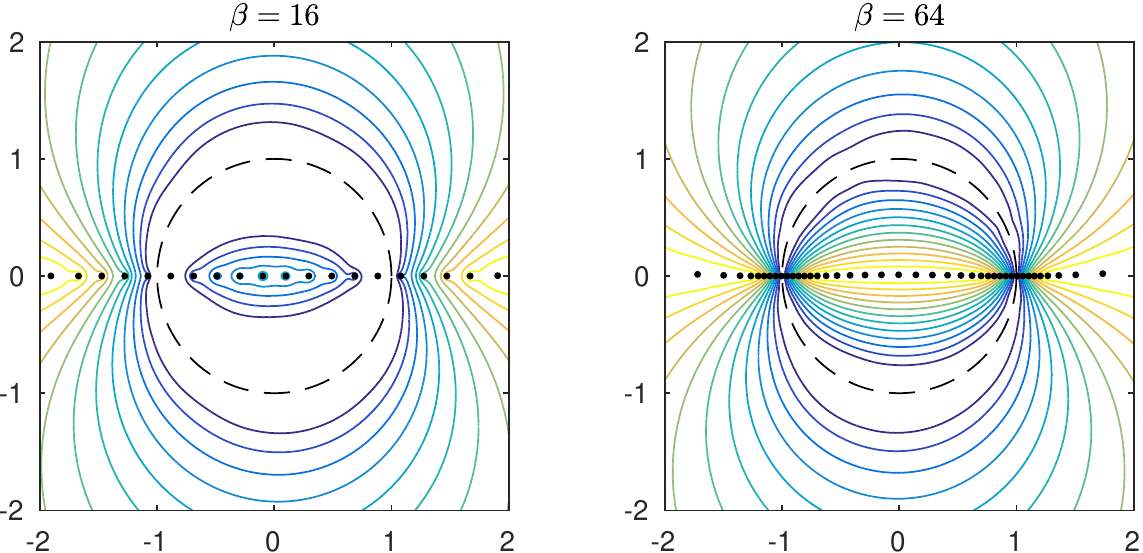}
\end{center}
\caption{\label{figapp3b} Application $\ref{sec-applics}.3$ continued, showing
more information about
the final approximations\/ $r$ of Figure~$\ref{figapp3}$ in
the cases $\beta =16$ and\/ $64$.  Contours of the error 
$|f(z) - r(z)|$ mark powers of $10$
from yellow (\/$10^{-1}$) to dark blue (\/$10^{-12}$).
With\/ $\beta =16$, all the poles of\/ $f$ are approximated by poles
of\/ $r$ and $r\approx f$ throughout $\Delta$.  
With\/ $\beta =64$, $f$ has more poles than $r$, and\/ $r$
approximates $f$ on the circle but not throughout $\Delta$.}
\end{figure}

We have spoken of the possibility of accuracy ``throughout
$\Delta$.''  There is a problem with such an expression, for even
if the poles of $\kern .5pt r$ in the unit disk match those of
$f$ closely, they will not do so exactly.  Thus, mathematically
speaking, the maximal error in the disk will almost certainly
be $\infty$ for all rational approximations of functions that
have poles.  Nevertheless one feels that it ought be be possible
to say, for example, that $1/(z-10^{-15})$ is a good approximation
to $1/z$.  One way to do this is to measure the difference between
complex numbers $w_1^{}$ and $w_2^{}$ not by $|w_2^{}-w_2^{}|$
but by the chordal metric on the Riemann sphere.  This and other
possibilities are discussed in~\cite{blm}.

\subsection{\label{sec-app4}Meromorphic functions in the unit disk from
boundary and interior values}\ If the aim is approximation
throughout the unit disk, the simplest approach is to distribute
sample points inside the disk as well as on the circle,
assuming function values are known there.  Figures~\ref{figapp4}
and~\ref{figapp4b} repeat Figures~\ref{figapp3} and~\ref{figapp3b},
but now with $3000$ points randomly distributed in the unit disk
in addition to $1000$ points on the unit circle.  Note that
$n$ must now be larger for convergence with $\beta =64$ and
especially $256$.  (This last case leads to large matrices and
the only slow computation of this article, about 30 seconds
on our laptops.)  We mentioned above that $f$ has 2, 10, 40,
and 162 poles in $\Delta$ for the four values of $\beta $.
The AAA approximants match the first three of these numbers, but
the fourth approximation comes out with 164 poles in $\Delta$,
the two extra ones being complex numbers near the real axis,
Froissart doublets of the non-numerical variety, with residues on
the order of $10^{-7}$ and $10^{-11}$.  If the number of points
in the disk is doubled to 6000, they go away.

\begin{figure}
\begin{center}
\vskip .2in
\includegraphics[scale=.6]{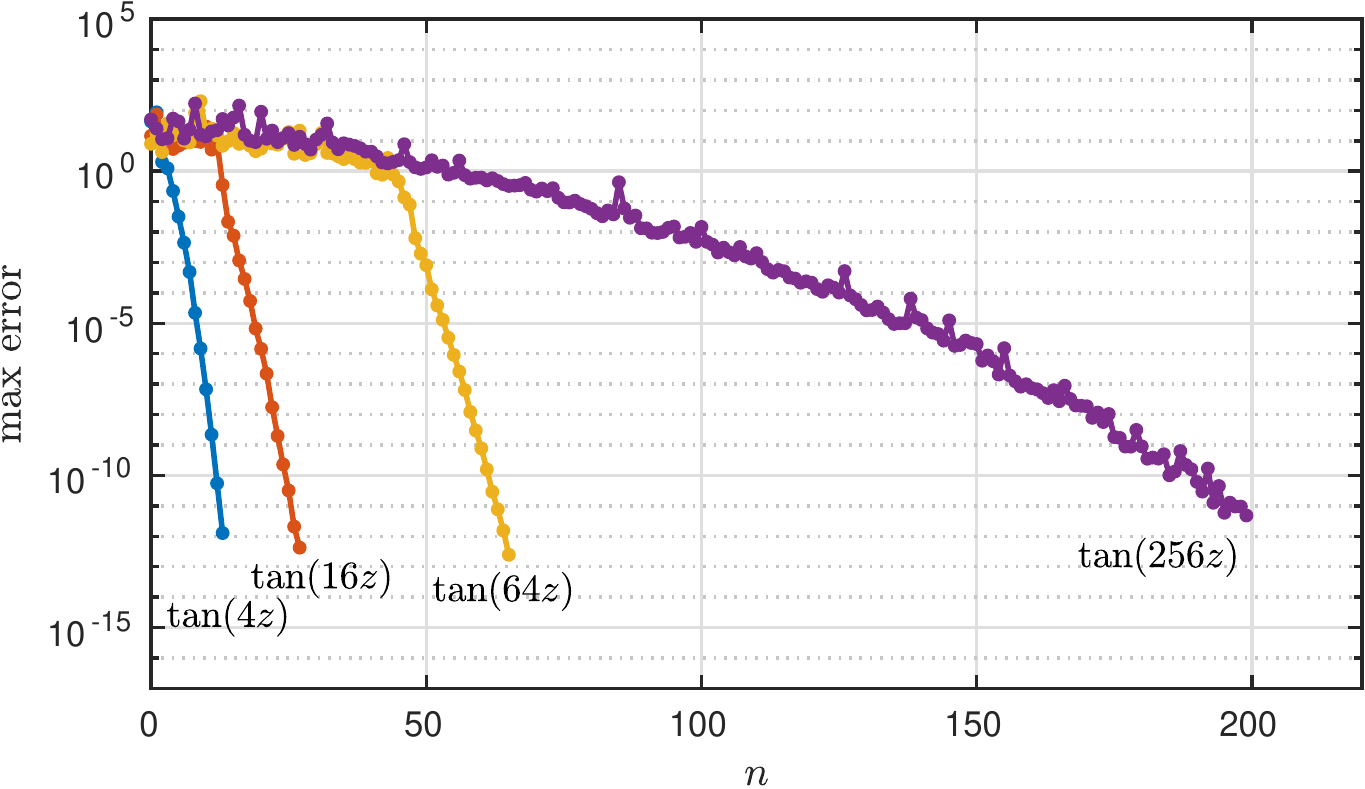}
\end{center}
\caption{\label{figapp4} Application $\ref{sec-applics}.4$.
Repetition of Figure~$\ref{figapp3}$
for a computation with $3000$ points in the interior of the unit disk
as well as $1000$ points on the boundary.  Higher values
of\/ $n$ are now needed to achieve $13$-digit accuracy, but all
the poles are captured.
If many more than $3000$ points were used,
the final curve would not show any convergence until $n\ge 162$.}
\end{figure}

\begin{figure}
\begin{center}
\vskip .2in
\includegraphics[scale=.8]{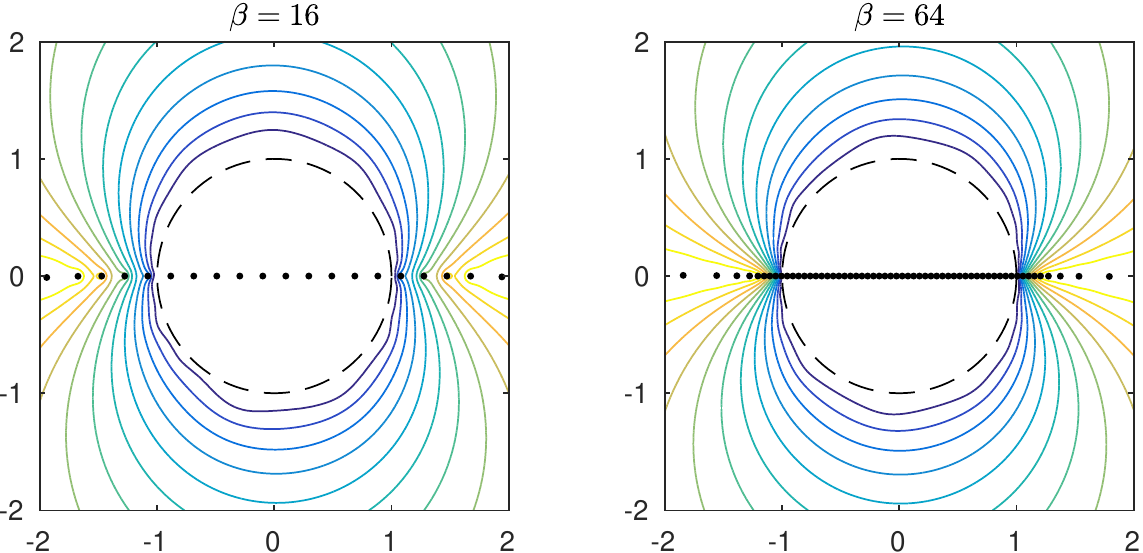}
\end{center}
\caption{\label{figapp4b} Application $\ref{sec-applics}.4$, continued,
a repetition of Figure~$\ref{figapp3b}$
for the computation with points in the interior of the disk.
Now the rational approximants are accurate throughout $\Delta$.}
\end{figure}

\subsection{\label{sec-app5}Approximation in other connected
domains} The flexibility of AAA approximation becomes most
apparent for approximation on a non-circular domain~$Z$.  Here
the observations of the last few pages about the advantages of
barycentric representations compared with quotients of polynomials
$p(z)/q(z)$ still apply.  In addition, the latter representations
have the further difficulty on a non-circular domain $Z$ that
it is necessary to work with a good basis for polynomials on $Z$
to avoid exponential ill-conditioning.  If $E$ is connected, the
well-known choice is the basis of Faber polynomials, which might
be calculated a priori or approximated on the fly in various
ways~\cite{gaier}.  The AAA algorithm circumvents this issue
by using a partial fraction basis constructed as the iteration
proceeds.  For example, Figure~\ref{figapp5} shows error contours of the
AAA approximant of the reciprocal Bessel function $1/J_0^{}(z)$
in 2000 uniformly distributed random points in the rectangle
defined by the corners $\pm i$ and $10 \pm i$.  The algorithm
exits at step $m=13$ with an approximation of type $(12,12)$.
The computed poles in the domain rectangle are almost exactly
real, even though no real symmetry has been imposed, and almost
exactly equal to the poles of $f$ in this region.

{\small
\begin{verbatim}

Poles of f in rectangle     Poles of r in rectangle
2.404825557695780           2.404825557695776 - 0.000000000000001i
5.520078110286327           5.520078110286310 - 0.000000000000000i
8.653727912911013           8.653727912911007 + 0.000000000000002i

\end{verbatim}
\par}

\begin{figure}
\begin{center}
\vskip .2in
\includegraphics[scale=.75]{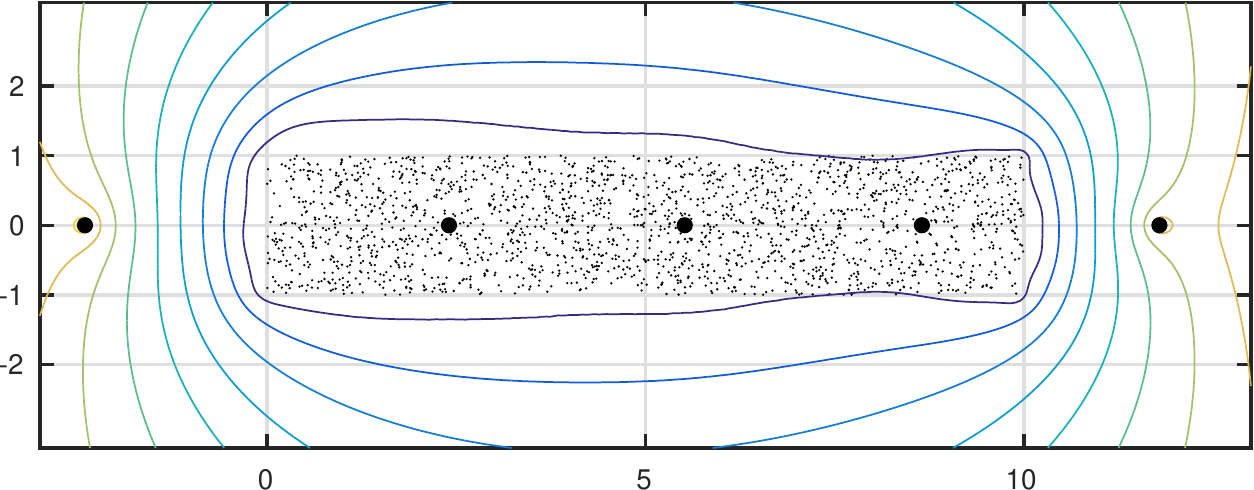}
\end{center}
\caption{\label{figapp5} Application $\ref{sec-applics}.5$.
Approximation of\/ $1/J_0^{}(z)$ in\/ $2000$
random points in a rectangle in the complex plane.  Contours
mark levels $10^{-12},\dots, 10^{-1}$ of the error
$|f(z)-r(z)|$, as in Figures~$\ref{figapp3b}$ and\/~$\ref{figapp4b}$, and the
dots are the poles of the rational approximant.}
\end{figure}

Figure~\ref{sec11fig} in Section~\ref{comparisons} compares the condition numbers
of the basis in this example against what would be found for
other methods.

\subsection{\label{sec-app6}Approximation in disconnected domains}
One of the striking features of rational functions is their
ability to approximate efficiently on disconnected domains and
discretizations thereof.  (This is related to the power of IIR as
opposed to FIR filters in digital signal processing~\cite{dsp}.) As
an example, Figure~\ref{figapp6} shows error contours for
the approximation of $f(z) = \hbox{sign}(\hbox{Re}(z))$ on a set $Z$
consisting of 1000 points equally spaced around a square in the
left half-plane (center $-1.5$, side length $2$) and 1000 points
equally spaced on a circle in the right half-plane (center $1.5$,
diameter~$2$).  This is the only one of our applications in which
numerical Froissart doublets appear.  Convergence is achieved
at $m = 51$, and then six doublets are removed by {\tt cleanup}.
One would expect the resulting approximant to be of type $(44,44)$,
but in fact the type is $(43,43)$ because one weight $w_j^{}$
is identically zero as a consequence of the Loewner matrix having
half its entries exactly zero.

\begin{figure}
\vskip .3in
\begin{center}
\includegraphics[scale=.6]{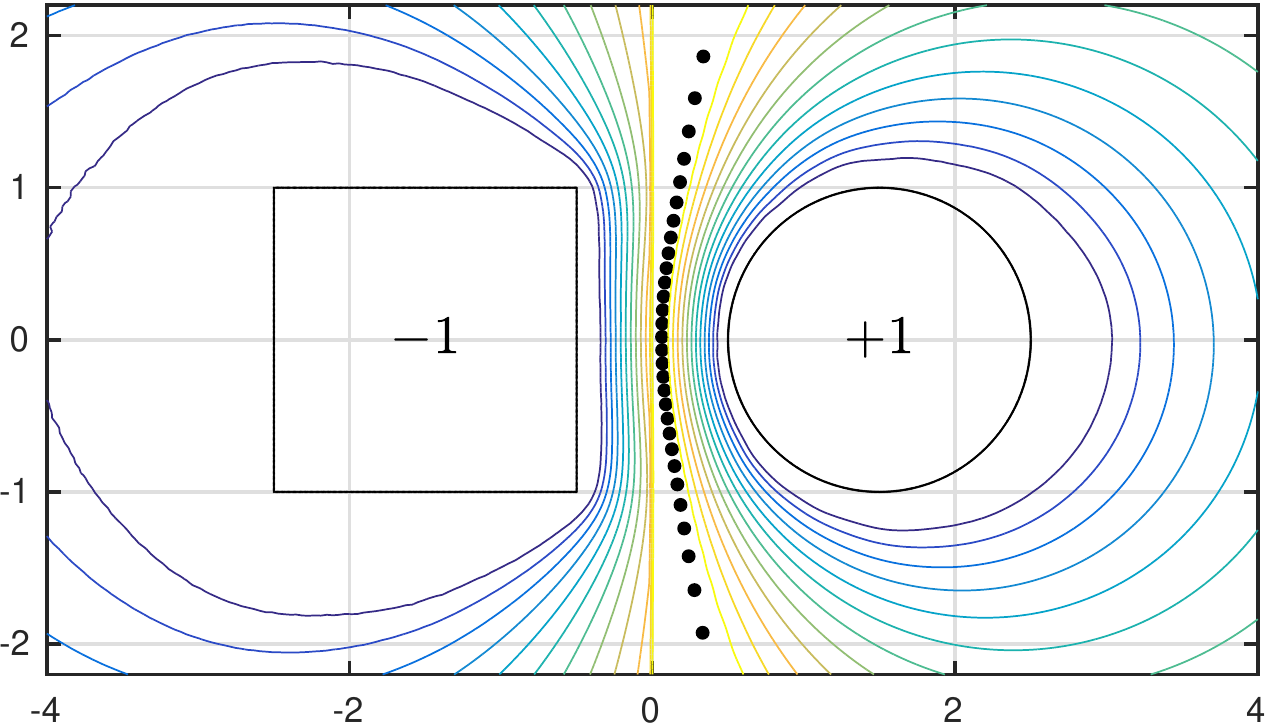}
\end{center}
\caption{\label{figapp6} Application $\ref{sec-applics}.6$.
Approximation of\/ $\hbox{\rm sign}(\hbox{\rm Re}
(z))$ in\/ $2000$
points on a disconnected set consisting
of a square and a circle in the complex plane, with contour
levels as before.  Six numerical Froissart doublets have been automatically
removed.}
\end{figure}

There is not much literature on methods for practical
rational approximation on disconnected domains.  If one uses
a representation of $r$ as a quotient of polynomials $r(z) =
p(z)/q(z)$, the problem of finding well-conditioned bases for $p$
and $q$ arises again.  The generalizations of Faber polynomials for
disconnected domains are known as {\em Faber--Walsh polynomials,}
but they seem to have been little used except in~\cite{sl}.

\subsection{\label{sec-app7}\boldmath Approximation of $|x|$ on $[-1,1]$}

Our next application concerns the first of the ``two famous
problems'' discussed in Chapter 25 of~\cite{atap}: the rational
approximation of $f(x) = |x|$ on $[-1,1]$.  In the 1910s it was
shown by Bernstein and others that degree $n$ polynomial approximants of this
function achieve at best $O(n^{-1})$ accuracy, whereas a celebrated
paper of Donald Newman in 1964 proved that rational approximations
can have root-exponential accuracy~\cite{newman}.  The precise
convergence rate for minimax (best $L^\infty$)
approximations was eventually shown to be $E_{nn}^{}(|x|)
\sim 8\exp(-\pi\sqrt n\kern .7pt )$~\cite{stahlabs}.

Computing good rational approximations of $|x|$ is notoriously
difficult.  The trouble is that good approximations need
poles and zeros exponentially clustered in conjugate pairs
along the imaginary axis near $x=0$, with the extrema of the
equioscillating error curve similarly clustered on the real axis.
Varga, Ruttan, and Carpenter needed 200-digit extended precision to
compute minimax approximations up to type $(80,80)$ using $p/q$
representations~\cite{vrc}.  In standard 16-digit arithmetic,
Chebfun's Remez algorithm code used to fail at degree about
$(10,10)$, like other available Remez codes.  However,
since the present paper was first submitted for publication,
the algorithm has been redesigned based on barycentric
representations with adaptive support points, and the picture has
changed completely.  The new {\tt minimax} gets successfully
up to type $(80,80)$ in ordinary floating-point arithmetic~\cite{fntb}.
See also Chapter~3 of~\cite{ionita}.

\begin{figure}
\begin{center}
\vskip .2in
\includegraphics[scale=.60]{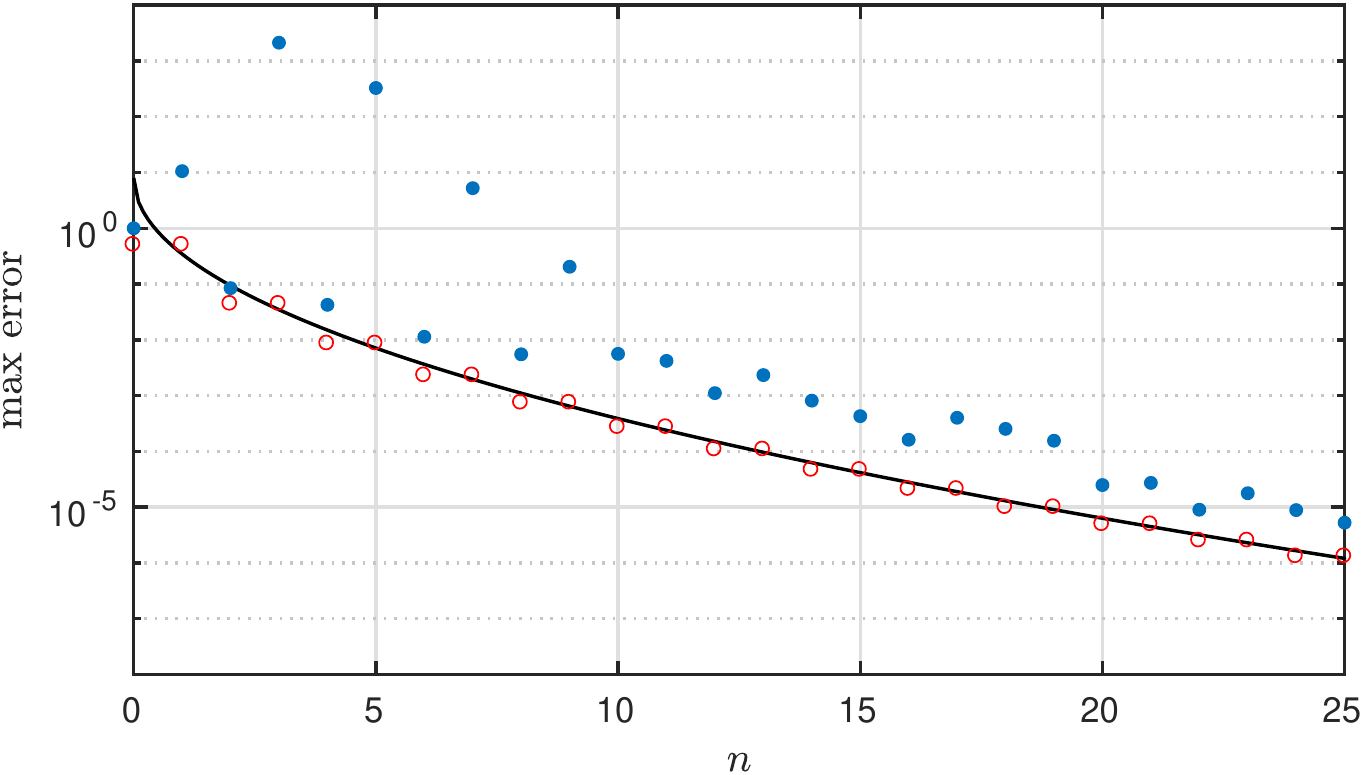}
\end{center}
\caption{\label{figapp7}Application $\ref{sec-applics}.7$.
Upper dots: errors in AAA approximation
of\/ $|x|$ in\ {\rm 200,000} points in $[-1,1]$.  Odd values of $n\ge 3$
give approximations
with poles in $[-1,1]$, while the even-$n$ approximations are
pole-free for $n\le 6$.
Because of the large value of $M$,
this computation took about\/ $8$ seconds in Matlab on a desktop machine.
Lower dots: best approximation errors for the same problem, superimposed
on the asymptotic result of\/~{\rm \cite{stahlabs}}.}
\end{figure}

Here, we show what can be achieved with AAA itself.  Suppose we
apply the AAA algorithm with a set $Z$ of 200,000 equispaced
points in $[-1,1]$.  The results in Figure~\ref{figapp7} show
interesting behavior.  The approximations of type $(n,n)$ with
$n = 0,1,2$ are as usual, with the error at $m=2$ not far from
optimal.  The approximations with $n = 3,5,7,\dots,$ however,
have large errors on the sample set $Z$ and in fact infinite
errors on $[-1,1]$.  In these cases AAA must use an odd number
of poles in approximating an even function, so at least one of
them must be real, and it falls in $[-1,1]$.  These Froissart
doublets are not numerical: they would appear even in exact
arithmetic.  Eventually the dots cease to fall on two clear curves
distinguishing even and odd $n$, and in fact, the approximations
from $n=8$ on all have at least one pole in $[-1,1]$, though this
is not always apparent in the data of the figure.

To approximate $|x|$ on $[-1,1]$, one can do better by
exploiting symmetry and approximating $\sqrt x$ on $[\kern
.5pt 0,1]$, an equivalent problem.  (See p.~213 of~\cite{atap}.
This was the formulation
used by Varga, Ruttan, and Carpenter~\cite{vrc}.)
This transformation enables successful AAA approximations up to $n=80$,
better than 10 digits of accuracy, with no poles in the interval
of approximation.  This is highly satisfactory as regards the $|x|$
problem, but it is not a strategy one could apply in all cases.
The fact is that the core AAA algorithm risks introducing unwanted
poles when applied to problems involving real functions on real
intervals, and the reason is not one of even or odd symmetry.

\subsection{\label{sec-app8}\boldmath
Approximation of $\exp(x)$ on $(-\infty,0\kern .5pt]$}
We now look at the second of
the ``two famous problems'' of~\cite{atap}: the rational approximation of
$f(x) = \exp(x)$ on $(-\infty,0\kern .5pt]$.
As told in~\cite{atap},
many researchers have contributed to this problem over the
years, including Cody, Meinardus and Varga, Trefethen and Gutknecht,
Magnus, Gonchar and Rakhmanov, and Aptekarev.  The sharp result is
that minimax approximation errors for type $(\nu,\nu)$
approximation decrease geometrically
as $\nu\to\infty$ at a rate $\sim 2H^{n+1/2}$, where
$H$, known as {\em Halphen's constant,} is approximately
$1/9.28903$~\cite{aptek1}.  The simplest effective way
to compute these approximants accurately is to transplant
$(-\infty,0\kern .5pt ]$ to $[-1,1]$ by a
M\"obius transformation and then apply CF approximation~\cite{tws}.
Figure~\ref{figapp8} shows that one can come within a
factor of~10 of optimality by applying
the AAA algorithm with $Z$ as the set of 4000 points logarithmically
spaced from $-10^4$ to $-10^{-3}$.  (We loosened the tolerance
from $10^{-13}$ to $10^{-12}$.)  Note that in this case the
AAA algorithm is working with little trouble on
a set of points ranging in amplitude by seven orders of magnitude.
\begin{figure}
\begin{center}
\vskip .2in
\includegraphics[scale=.6]{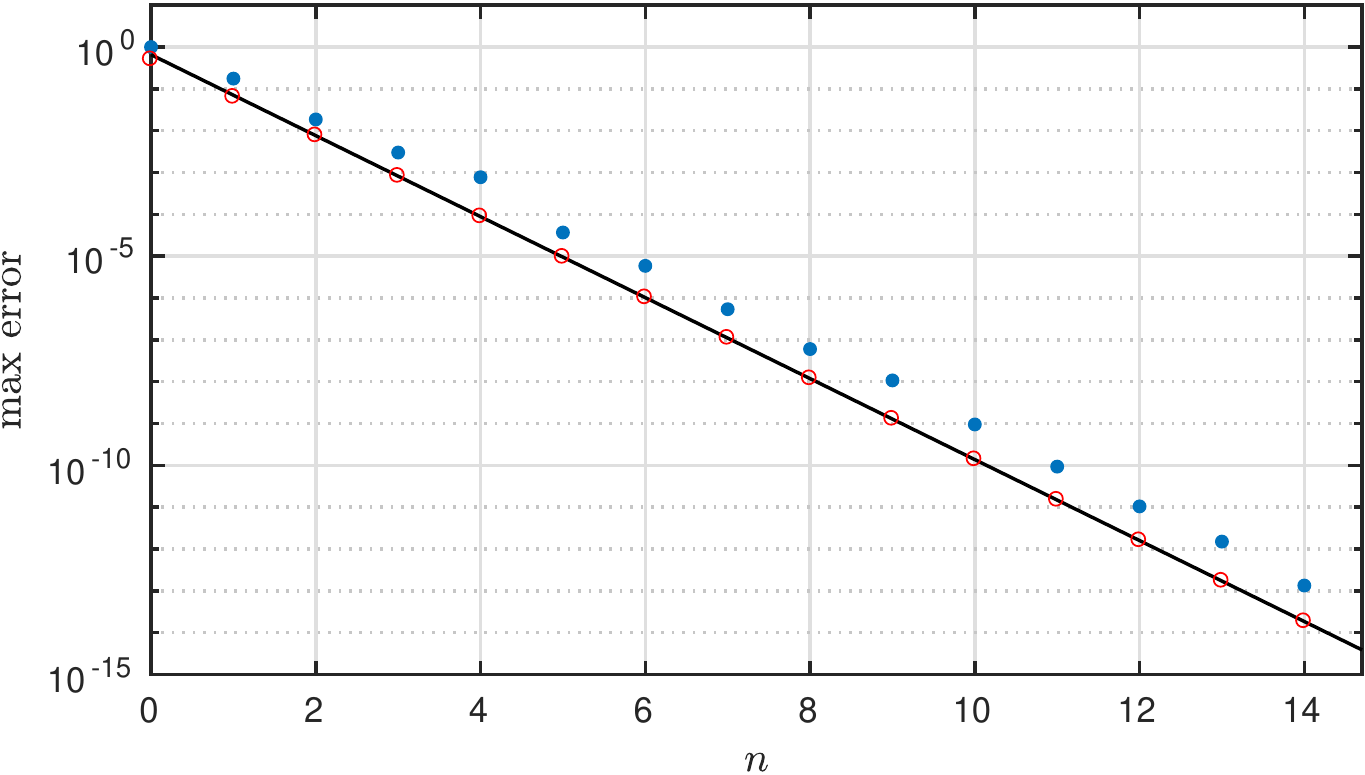}
\end{center}
\caption{\label{figapp8}Application $\ref{sec-applics}.8$.
Upper dots: errors in AAA approximation of
$\exp(x)$ in\/ $2000$ points in $(-\infty,0\kern .5pt]$ logarithmically
spaced from $-10^4$ to $-10^{-3}$.
This computation took about\/ $1$ second in Matlab on a desktop machine.
Lower dots: best approximation errors for the same problem, superimposed
on the asymptotic result of~{\rm \cite{aptek1}}.}
\end{figure}

\begin{figure}
\begin{center}
\vskip .2in
\includegraphics[scale=.65]{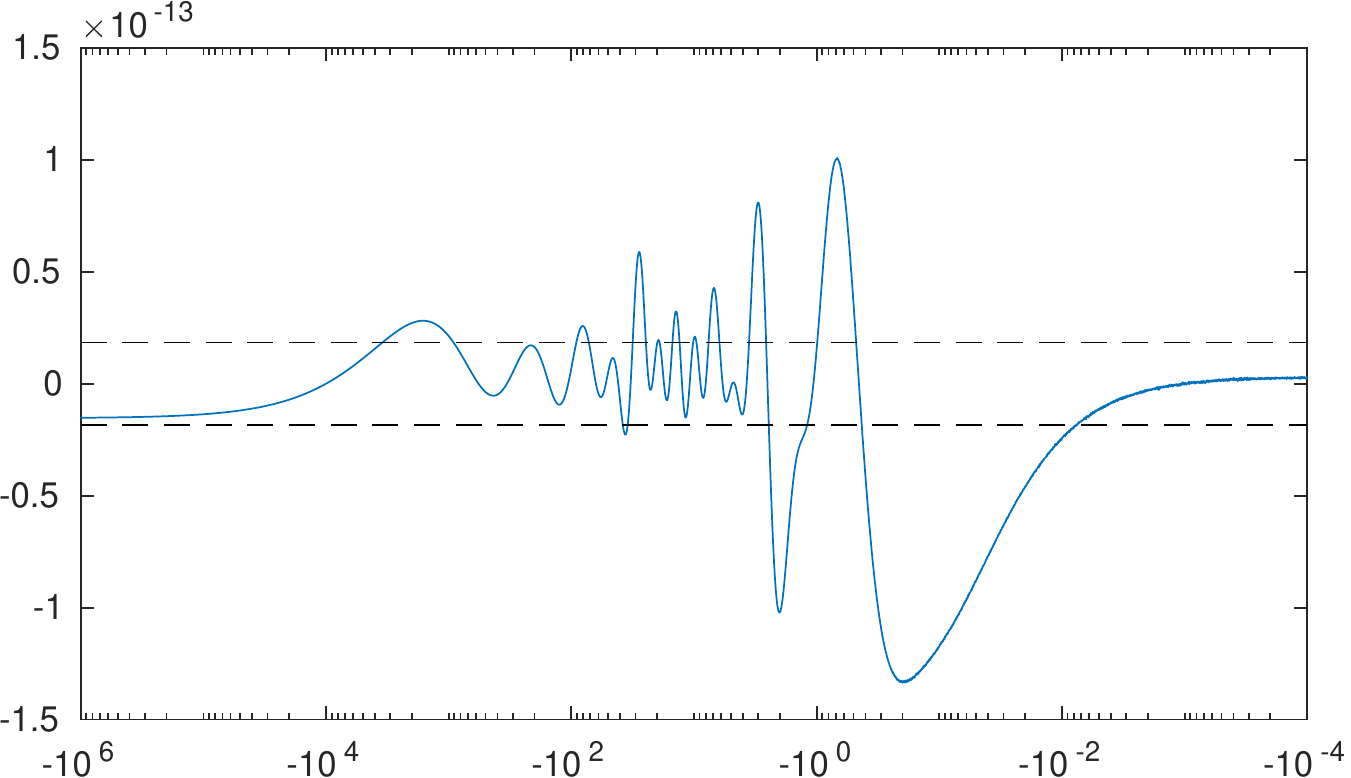}
\end{center}
\caption{\label{figapp8b}Application $\ref{sec-applics}.8$, continued.  Error curve
$f(x)-r(x)$, $x\in (-\infty, 0\kern .5pt ]$,
for the final approximant, of type $(14,14)$, which
not-quite-equioscillates
between $26$ local extrema of alternating signs.  This approximation is
of type $(14,14)$, so the error curve for the
minimax rational function would have $30$ equioscillatory extreme
points, with the amplitude shown by the dashed lines.  See~{\rm \cite{fntb}}
for next steps to bring such an error curve to truly equioscillatory form.}
\end{figure}

\subsection{\label{sec-app9}Clamped beam model from
Chahlaoui and Van Dooren}
Our final example is the clamped beam model
from the NICONET collection of examples for model order reduction by
Chahlaoui and Van Dooren~\cite{chdo}.  Here $f$
is a rational function of type $(348,348)$ of the
form $f(z) = c^T(zI-A)^{-1} b$, where $c$ and $b$ are
given column vectors of dimension $348$ and $A$ is a 
given $348\times 348$ matrix.  As usual in model order reduction, the
aim is to approximate $f$ on the imaginary axis.  

As AAA iterates for this problem, it captures one pole pair
after another successfully.  We sampled $f$ at $500$ logarithmically
spaced points from $10^{-2}\kern .5pt i$ to $10^2\kern .5pt i$ on the imaginary axis,
as well as their complex conjugates, and 
Figure~\ref{figapp9} shows the positive halves of the resulting
rational functions after steps $m = 3$, $7$, and $13$ --- that
is, of types $(2,2)$, $(6,6)$, and $(12,12)$.  Clearly the function
is being captured successfully, one pole pair after another.
Figure~\ref{figapp9b} shows the maximum error as a function of
$n$ in a run with a specified target relative tolerance of 
$10^{-5}$.   All the poles of all the approximants lie in
the left half-plane.
\begin{figure}
\begin{center}
\vskip .2in
\includegraphics[scale=.7]{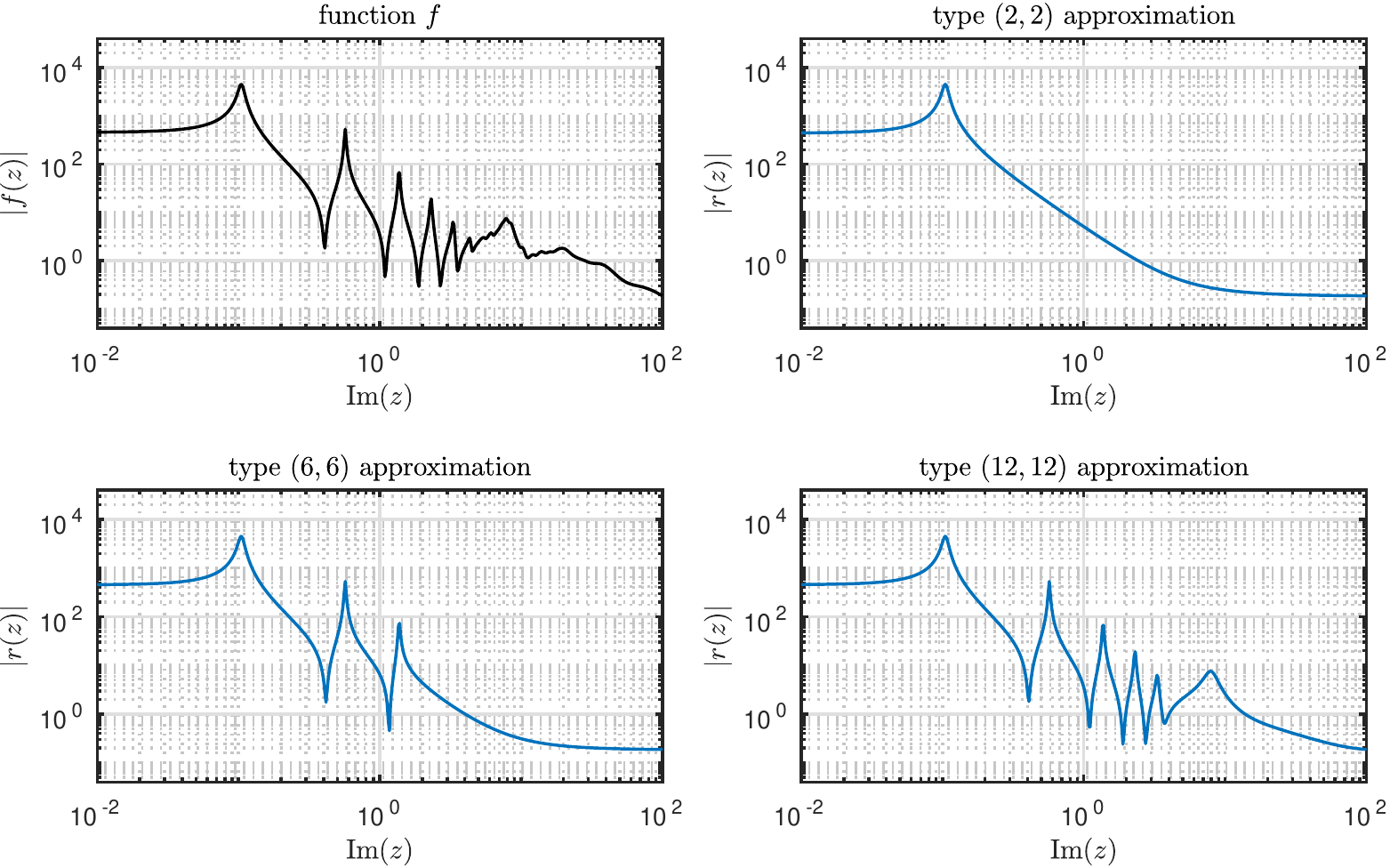}
\end{center}
\caption{\label{figapp9}Application $\ref{sec-applics}.9$.
For the clamped beam example from the collection of
Chahlaoui and Van Dooren~{\rm\cite{chdo}}, AAA captures one
pole pair after another near the imaginary axis.}
\end{figure}

\begin{figure}
\begin{center}
\vskip .2in
\includegraphics[scale=.6]{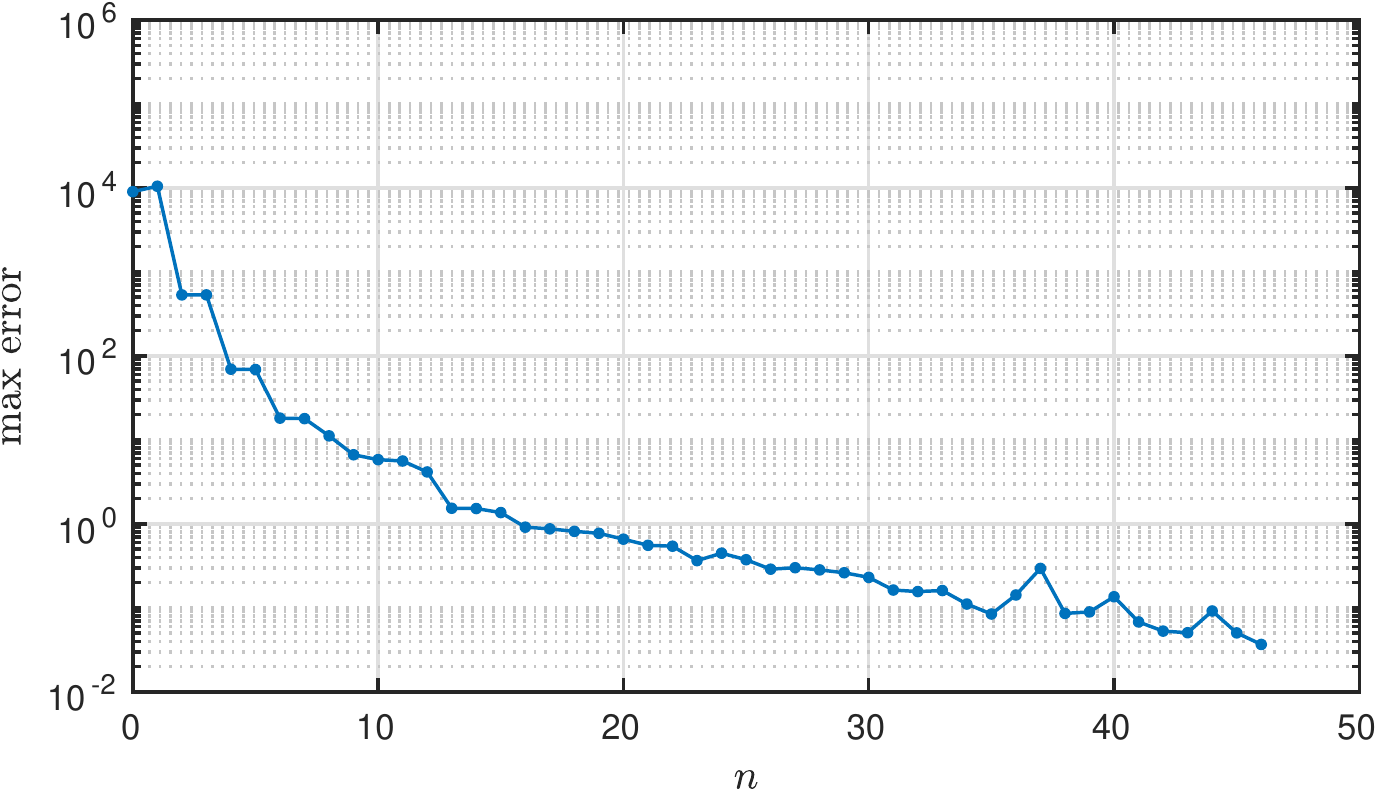}
\end{center}
\caption{\label{figapp9b}Application $\ref{sec-applics}.9$, continued.
Accuracy of the type $(n,n)$ AAA approximant 
as a function of $n$.}
\end{figure}

\section{\label{sec-symmetrized}\boldmath Modified algorithm
to treat large and small data symmetrically} On the Riemann
sphere, the point $z=\infty$ has no special status, and a function
meromorphic on the whole sphere must be rational.  From this point
of view every rational function has equal numbers of poles and
zeros: if there are $\mu$ finite zeros and $\nu$ finite poles,
then $z=\infty$ will be a pole of order $\mu-\nu$ if $\mu>\nu$
and a zero of order $\nu-\mu$ if $\nu>\mu$.  Moreover, if $r$
is a rational function with $\nu$ poles and $\nu$ zeros on the
sphere, then the same is true of $1/r$.

One may ask whether the AAA algorithm respects such symmetries.
If $r$ is a AAA approximant of $f$ of type $(m-1,m-1)$ on a
given set $Z$, will $1/r$ be the corresponding approximant of
$1/f$?  Will $r(1/z)$ be the approximant of $f(1/z)$ on $1/Z$?
The answer is yes to the second question but no to the first for
the AAA algorithm as we have formulated it, but a small change
in the algorithm will ensure this property.  In the linear
least-squares problem (\ref{l2prob}) solved at each step, the
quantity associated with the point $z$ is $|f(z) d(z) - n(z)|$.
To achieve symmetry it is enough to change this to $|d(z) -
n(z)/f(z)|$ at each point $z$ where $|f(z)|>1$.  For points
with $f(z)\ne \infty$ this amounts to dividing the row of the
matrix $A$ of (\ref{AfromC}) corresponding to $z$ by $f(z)$.
(The same scaling was introduced in~\cite{in} for reasons of
numerical stability.)  On the Riemann sphere, the northern and
southern hemispheres are now equal.  To complete the symmetry
one also replaces $f(z) - n(z)/d(z)$ by $1/f(z) - d(z)/n(z)$
at such points for the greedy choice of the next support point.

If we make the changes just described, different symmetries
are broken, the conditions of translation-invariance and
scale-invariance in $f$ of Proposition~\ref{prop}.  The source of
the problem is the constant~1 in the expressions $1/r$, $1/f$,
and $|f(z)|>1$.  (On the Riemann sphere, we have privileged the
equator over other latitudes.)  To restore scale-invariance in
$f$ one could replace the condition $|f(z)|>1$ by $|f(z)|>C$,
where $C$ is a constant that scales with $f$ such as the median
of the values $|f(Z)|$.

Besides invariance properties and elegance, another reason
for imposing a large-small symmetry is to make it possible to
treat problems where $f$ takes the value $\infty$ at some points
of $Z$.  For example, this would be necessary in the ``ezplot''
example of Figure~\ref{ezfig} if the interval $[-1.5,1.5]$ were
changed to $[-1,1]$, making $-1$ into one of the sample points.
In the symmetric setting, such a function value is no different
from any other.

\section{\label{sec-symm}\boldmath Modified algorithms to impose
even, odd, or real symmetry} Many applications involve
approximation with a symmetry: $f$ may be an even or an odd
function on a set with $Z=-Z$, or $f$ may be hermitian in
the sense of satisfying $f(\overline{z}) = \overline{f(z)}$ on a
subset of the real line or some other set with $Z = \overline{Z}$.
The core AAA algorithm will usually break such symmetries, with
consequences that may be unfortunate in practice and disturbing
cosmetically.  One may accordingly turn to variants of the
algorithm to preserve the symmetries by choosing new support
points usually in pairs, two at a time (perhaps with the point
$z=0$ treated specially).

We saw an example of even symmetry in \S\ref{sec-app7} in the
approximation of $|x|$ on $[-1,1]$, where the core AAA algorithm
produced unwanted poles.  So far as we are aware, such cases
of even or odd functions can usually be treated effectively by
changing variables from $z$ to $z^{1/2}$ rather than modifying
the AAA algorithm.

The case of hermitian symmetry is different in that one
can sometimes achieve a superficial symmetry while still not
coping with a deeper problem.  If $|x|$ is approximated on a set
$Z\subseteq [-1,2]$, for example, then all support points will of
course be real and no complex numbers will appear.  Nevertheless,
there is the problem that the AAA algorithm increases the rational
type one at a time, so that odd steps of the iteration, at least,
will necessarily have at least one real pole, which will often
appear near the singular point $x=0$.  We do not have a solution
to offer to this problem.

\section{\label{sec-munu}\boldmath Modified algorithm for
approximations of type $(\kern .5pt \mu,\nu)$} In many
applications, as well as theoretical investigations, it is
important to consider rational approximations of type $(\kern .5pt
\mu,\nu)$ with $\mu \ne \nu$.  The barycentric representation
(\ref{baryagain}) is still applicable here, with a twist.
It is now necessary to restrict the weight vector $\{w_j^{}\}$
to lie in an appropriate subspace in which the numerator
degree is constrained to be less than the denominator degree,
or vice versa.  This concept is familiar in the extreme case of
polynomial approximation, $\nu = 0$, where it is well known that
the barycentric weights must take a special form~\cite{bt}.
Barycentric representations of type $(\kern .5pt \mu,\nu)$
rational functions are a natural generalization of this
case~\cite[Sec.~3]{bbm}, and in the follow-up paper~\cite{fntb}
we give details.

The use of approximations with $\mu\ne \nu$ raises many issues.
The barycentric representations must be generalized as just
mentioned, and this adds new linear algebra features to
the algorithm.  There is also the important practical matter
of determining representations with minimal values of $\nu$
to achieve various goals.  For example, based on boundary
values of a meromorphic function on the unit circle, how best
can one determine all its poles in the disk?  The discussion
of \S\ref{sec-app3} indicated some of the challenges here.
Algorithms for minimizing the degree of the denominator have been
presented for Pad\'e approximation~\cite{ggt} and interpolation
in roots of unity~\cite{gpt}, and it would be very useful to
develop analogous extensions of AAA approximation.

\section{\label{sec-variants}\boldmath Other variants}
We have referred to the ``core'' AAA algorithm to emphasize that
in certain contexts, the use of variants would be appropriate.
Three of these have been mentioned in the last three sections.
We briefly comment now on six further variants.

{\em Weighted norms} can be introduced in the least-squares
problem at each step by scaling the rows of the matrix $\Am$ of
(\ref{nla}).  (One may also apply a nonuniform weight in the
greedy nonlinear step of the algorithm.)  This may be useful in
contexts where some regions of the complex plane need to be sampled
more finely than others, yet should not have greater weight.
For example, we used 200,000 points to get a good approximation of
$[-1,1]$ in the approximation of $|x|$ in Section~\ref{sec-app7},
but one can get away with fewer if the points are exponentially
graded but with an exponentially graded weight function to
compensate.

{\em Continuous sample sets} $Z$ such as $[-1,1]$ or the unit disk
are commonly of interest in applications, and while $Z$ is
usually discrete in practice, it is very often an approximation
to something continuous.  But it is also possible to formulate
the AAA algorithm in a continuous setting from the start, with
$M$ taking the value $\infty$.  The barycentric representation
(\ref{baryagain}) remains discrete, along with its set of support
points, but the selection of the next support point at step $m$
now involves a continuous rather than discrete optimization
problem, and the least-squares problem (\ref{l2prob}) is now a
continuous one.  (Chebfunners say that the Loewner and Cauchy
matrices (\ref{nla}) and (\ref{Cmatrix}) become $\infty\times
m$ {\em quasimatrices,} continuous in the vertical direction~\cite{chebfun}.)
Questions of weighted norms will necessarily arise whenever
$Z$ mixes discrete points with continuous components; here the
least-squares problem could be defined by a Stieltjes measure.

>From an algorithmic point of view, problems with continuous sample
sets suggest variants of AAA based on a discrete sample set $Z$
that is enlarged as the computation proceeds.  Such an approach is
used in~\cite{in} and is also an option in the Chebfun {\tt aaa}
code, but we make no claims that success is guaranteed.

{\em Confluent sample points} is the phrase we use to refer to a
situation in which one wishes to match certain derivative data as
well as function values; one could also speak of ``Hermite-AAA''
as opposed to ``Lagrange-AAA'' approximation.  Antoulas and
Anderson consider such problems in~\cite{aa}, treating them
by adding columns such as $(Z_i^{(m)}-z_m^{})^{-k}$ with $k>1$
to the Cauchy matrix (\ref{Cmatrix}) as well as new rows to the
Loewner matrix $\Am$ of (\ref{nla}) to impose new conditions.
The setting in~\cite{aa} is interpolatory, so weighting of rows
is not an issue except perhaps for numerical stability, but such
a modification for us would require a decision about how to weight
derivative conditions relative to others.

Another variant is the use of {\em non-interpolatory
approximations}.  Here, instead of using the same parameters
$\{w_j^{}\}$ in both the numerator and the denominator of (\ref{baryagain}),
which forces $n(z)/d(z)$ to interpolate $f$ at the support points,
we introduce separate parameters in $n(z)$ and $d(z)$ --- say,
$\{\alpha_j^{}\}$ and $\{\beta_j^{}\}$.  This is a straightforward
extension of the least-squares problem (\ref{l2prob}), with each
column of the Loewner matrix (\ref{nla}) splitting into two,
and the result is a rational approximation in barycentric form
that does not necessarily interpolate the data at any of the
sample points.  In such a case (which no longer has much link
to Antoulas and Anderson) we are coming closer to the method of
fundamental solutions as mentioned in the introduction, though
still with the use of the rational barycentric representation
(\ref{baryagain}).  The sample points and support points can be
chosen completely independently.  Our experiments suggest that
for many applications, non-interpolatory approximants are not so
different from interpolatory ones, and they have the disadvantage
that computing the SVD of the expanded Loewner matrix takes
about four times as long, which can be an issue for problems like
Application~\ref{sec-applics}.4 where $m$ is large.  (A related
issue of speed is the motivation of~\cite{berrut00}.)  However,
we now mention at least one context in which the non-interpolatory
approach may have significant advantages, enabling one to 
work with rational functions that satisfy an optimality
condition.

This is the matter of {\em iterative reweighting,} the basis of
the algorithm for minimax approximation known as {\em Lawson's
algorithm}~\cite{ew,lawson}.  Suppose an approximation $r\approx
f$ is computed by $m$ steps of the AAA algorithm.  Might it then
be improved to reduce the maximum error to closer to its minimax
value?  The Lawson idea is to do such a thing by an iteration
in which, at each step, least squares weights are adjusted in a
manner dependent on the corresponding error values at those points.
If the AAA calculation is run in the interpolatory mode, then
there is no chance of a true minimax approximation, since half the
parameters are used up in enforcing interpolation at prescribed
points, so for Lawson iterative reweighting, non-interpolatory
approximation is better.  Subsequent to the submission of the
original version of this paper, we have investigated this idea
and it is one of the key ingredients in the new {\tt minimax}
code mentioned in \S\ref{sec-app7}.  Details are reported in~\cite{fntb}.

Finally,
we have presented everything in the context of scalar functions
$f$ and $r$ of a scalar variable $z$, but in many applications $f$ may
be a vector or a matrix and $z$ may be a matrix.
So a natural extension of our work
is to {\em vector and matrix approximation.}

\section{\label{comparisons}Comparisons with
vector fitting and other algorithms}

Rational approximation is an old subject, and many algorithms have
been proposed over the years.  Probably nobody has a complete view of this
terrain, as the developments have occurred in diverse fields including
often highly theoretical approximation theory~\cite{aptek1,aptek,braess,
gaier,levinsaff,newman,pomm,sl,stahlabs,stahl,stahlspurious,vrc,walsh},
physics~\cite{bgm,froissart,gk03,gp97,gp99},
systems and control~\cite{antoulas,aa,belevitch,chdo,dmdd,dgb,
gab,gust,gs,ionita,dsp,sk},
extrapolation of sequences and series~\cite{bz,bz13,bz15},
and numerical linear algebra~\cite{bg1,gk,in}.
The languages and emphases differ widely among the fields, with the systems
and control literature, for example, giving particular attention to vector
and matrix approximation problems and to approximation on the imaginary axis.
Our own background is that of scalar approximation theory and
numerical analysis.  In this section we
offer some comments and comparisons of AAA with other algorithms,
especially {\em vector fitting} and {\em RKFIT}.

As mentioned at the outset, the two key features of the
AAA algorithm are (1) barycentric representation and (2)
adaptive selection of support points.  Both are essential if
one is to avoid exponential ill-conditioning.  Barycentric
representations of rational functions originated in the
numerical analysis community with Salzer~\cite{salzer} and Schneider
and Werner~\cite{sw} and were developed especially
by Berrut and his students and colleagues, including
Floater and Hormann~\cite{berrut,berrut00,bbm,bt,fh,klein}.
The work by Antoulas and Anderson was a largely independent
development~\cite{antoulas,aa,ionita}.  None of the works just
mentioned selects support points adaptively, however, and none
led to a widely used tool for numerical rational approximation.

The impact has been much greater of the related method known as {\em
vector fitting,} due to Gustavsen and Semlyen~\cite{gust,gs,hd} and
with links to the earlier ``SK'' algorithm of Sanathanan and Koerner~\cite{sk}.
Vector fitting produces a rational approximation represented
in a manner lying between polynomial quotients $r = p/q$ and
barycentric quotients $r = n/d$, namely a {\em partial fraction,}
$r = n$, essentially the numerator (\ref{baryagain}) without
the denominator.  In such a representation the numbers $z_k^{}$
are not free parameters: they must be the poles of $r$.  In vector
fitting, one begins with a decision as to how many poles will be
used and an initial estimate of their locations, which is then
adjusted iteratively by a process that makes
use of barycentric quotients, the aim being to minimize a nonlinear
least-squares norm.  As the iteration proceeds, the barycentric
support points converge to the poles of $r$, with the consequence
that when the iteration is complete, the denominator of the barycentric
quotient becomes~$1$ and 
one is left with the partial fraction approximation $r = n$.
This idea has been very successful, and vector fitting is widely
known in systems and control.

Comparing vector fitting with AAA, we note these differences: (i)
a partial fraction result instead of a barycentric quotient, (ii)
number of poles fixed in advance, (iii) estimates of poles needed
to start the algorithm, (iv) iterative process of pole adjustment
rather than support point adjustment,
(v) lack of flexibility in choosing support points adpatively,
since they are the same as the poles, (vi) easy imposition of
extra conditions such as poles in the left half-plane, (vii)
explicit norm to be minimized, and (viii)
extensive computational experience in applications involving
vectors as well as scalars.  Our view, based on limited experience,
is that although (vi)--(viii) may give vector fitting advantages in certain
applications, (i)--(v) make AAA, on the whole,
more likely to converge, more accurate, and much
easier to use.  In a moment we shall give some numerical evidence.

First, we wish to mention the other most competitive method we are
aware of: RKFIT (rational Krylov fitting), by Berljafa
and G\"uttel~\cite{bg1,bg}, which also
aims to minimize a least-squares norm.  RKFIT is connected also
to the IRKA (iterative rational Krylov) algorithm of
Gugercin, Antoulas, and Beattie~\cite{gab}.
In these methods the representation of $r$ is not
$p/q$, nor $n/d$, nor $n$, but a different form involving
{\em orthogonal rational functions}.  Again there is an iterative
process involving adjustment of poles, which in our experience is
less sensitive to a successful initial choice
of poles than with vector fitting.  Our view, based on limited
experience, is that RKFIT is often more accurate than vector
fitting, and more flexible as regards initial pole locations
(which may be put at $\infty$).  On the other hand, it seems
to be slower.  AAA seems to be typically both fast and
accurate, as well as having the big advantage of running in a black
box fashion with no user input of pole number or locations.

In Section~\ref{sec-app9} we applied AAA to the clamped beam
model of Chahlaoui and Van Dooren~\cite{chdo}.  It was our intention
to compare this performance against that of vector fitting, but our
experiments with vector fitting failed to give satisfactory convergence.
This appears to be a matter of initial guesses of pole locations, and
Drma\v c, et al.\ have had some success with this application
with their quadrature-based vector fitting method~\cite{dgb}.
This is among the experiments we have conducted that have shown
us that successful vector fitting can sometimes be challenging.

Aside from the matter of initial user inputs, the difficulty
we see in vector fitting is that it aims for a partial fraction
rather than barycentric representation of $f$, and this may be
ill-conditioned because it depends on the locations of poles,
which need not be favorably distributed.
(This effect is discussed in Section 4.3 of~\cite{dgb}.)
Figure~\ref{sec11fig} illustrates the differences
between monomial, partial fraction, and AAA barycentric bases
for Application~\ref{sec-applics}.5, involving approximation of
$1/J_0^{}$ on 2000 points in a rectangle in the complex plane.
If we worked with a $p/q$ representation
in the monomial basis, the associated $2000\times m$
Vandermonde matrix at step $m$, even after column rescaling, would
have condition numbers growing exponentially with $m$, and this is
shown in the highest curve of the figure.  Perhaps more surprising
is the exponential growth of condition numbers shown by the
next curve, labelled ``partial fractions.''   Here each column
of the $2000\times m$ Cauchy matrix (\ref{Cmatrix}) at step $m$
is obtained by sampling the function $1/(z-z_j^{})$,
where $z_j^{}$ is one of the poles of the associated AAA
approximant at step $m$.  (These constitute a good distribution
of poles for approximation of the given type, as might have been
obtained by a successful vector fitting iteration.)  Strikingly
better conditioned are the AAA Cauchy matrices, corresponding
to quotients $1/(z-z_j^{})$ in which $z_j^{}$ is a AAA support
point rather than a pole.
Evidently the support points distribute themselves in a manner that leads to
good conditioning.  While there
is great variation between problems, our experiments suggest that
the differences revealed in Figure~\ref{sec11fig} are typical.

\begin{figure}
\begin{center}
\vskip .2in
\includegraphics[scale=.6]{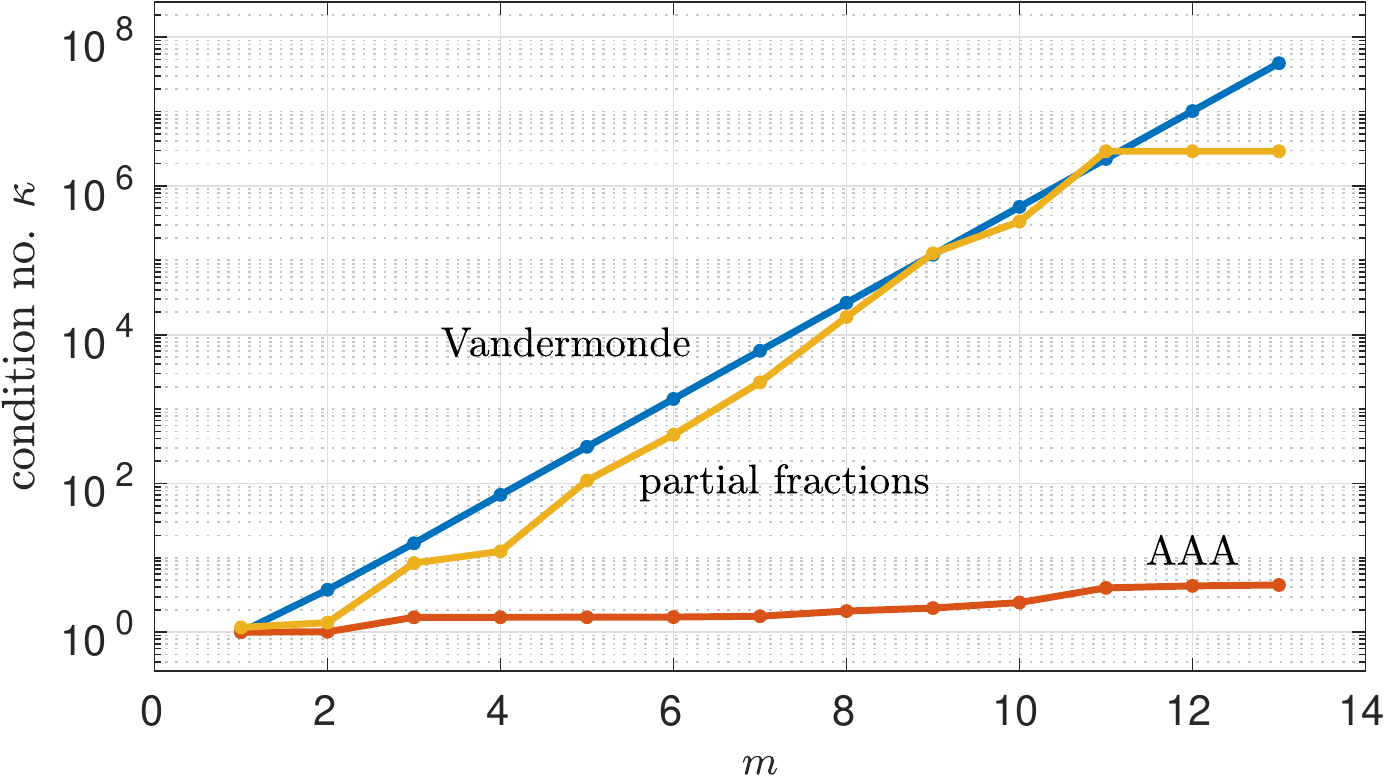}
\end{center}
\caption{\label{sec11fig}Condition numbers of bases
associated with $p/q$ quotients, vector
fitting, and the AAA algorithm for
Application~$\ref{sec-applics}.5$.  See the text for details.}
\end{figure}

Looking over the examples of this article, we note that
all but three of the Cauchy matrices $C$ (\ref{Cmatrix})
they utilize have condition numbers
close to $1$, and not greater than $40$; the exceptions are
the Cauchy matrices for the $\beta =256$ approximations of
Applications~\ref{sec-applics}.3 and~\ref{sec-applics}.4, with
$\kappa(C)\approx 10^4$, and for Application~\ref{sec-applics}.8,
with its exponentially graded grid, with $\kappa(C)\approx 10^8$.
Such anomalies are associated with support points that come very
close together.

\section{Conclusion}
The barycentric representation of rational functions is a flexible
and robust tool, enabling all kinds of computations that would be
impossible in floating point arithmetic based on an $r=p/q$
representation.  The AAA algorithm exploits the flexibility by
choosing support points adaptively.  The last section showed
that this leads to exceptionally well-conditioned bases for
representing numerators and denominators.

We do not imagine that the AAA algorithm is the last word in
rational approximation.  We do think, however, that its success shows
that barycentric quotients with adaptively selected
support points offer a strikingly effective way to compute with
rational functions.

We finish with one more example.  The following Matlab sequence
evaluates the Riemann zeta function at 100 points on the complex
line segment from $4-40i$ to $4+40i$, constructs a AAA approximant
$r$ of type $(29,29)$, and evaluates its poles and zeros.
This requires a bit less than a second on our desktop computer:

{\small
\begin{verbatim}

zeta = @(z) sum(bsxfun(@power,(1e5:-1:1)',-z))
[r,pol,res,zer] = aaa(zeta,linspace(4-40i,4+40i))

\end{verbatim}
\par}

\noindent 
The approximation has a pole at
$ 1.0000000000041 - 0.0000000000066i$ with residue
$0.999999999931 - 0.0000000014i$ and a zero at
$  0.500000000027 +14.134725141718i$.
These results match numbers for
$\zeta(z)$ in each case in all digits but the last two.

\section*{Acknowledgements}
We thank Anthony Austin, Jean-Paul Berrut,
Silviu Filip, and Stefan G\"uttel for
advice at many stages of this project.
It was G\"uttel who pointed us to (\ref{elliptic}).
In our ongoing work that has sprung from this publication so far,
reported in~\cite{fntb}, Filip has been a central figure.

\end{document}